\documentclass{amsart}

\usepackage{amssymb}
\usepackage{mathrsfs}
\usepackage{amsmath}

\newtheorem{theorem}{Theorem}[section]
\newtheorem{lemma}[theorem]{Lemma}
\newtheorem{proposition}[theorem]{Proposition}
\newtheorem{corollary}[theorem]{Corollary}
\theoremstyle{definition}

\theoremstyle{remark}
\newtheorem{remark}[theorem]{Remark}

\numberwithin{equation}{section}

\begin{document}

\title{K{\"a}hler Einstein manifolds and the Calabi curvature operator}


\author{Zhi-Lin Dai}
\address{School of Science, East China University of Technology, Fuzhou, 344000, Jiangxi, China}
\email{202361014@ecut.edu.cn}

\author{Hai-Ping Fu$^*$}
\address{Department of Mathematics,  Nanchang University, Nanchang 330031, P. R. China}
\email{mathfu@126.com}
\thanks{* Corresponding author}
\thanks{Supported in part by National Natural Science Foundations of China \#12461008 and 12271069, Jiangxi
Province Natural Science Foundation of China \#20202ACB201001, Jiangxi Province Graduate Student Innovation Special Fund Project \#YC2025-B037}

\author{Yao Lu}
\address{Department of Mathematics,  Nanchang University, Nanchang 330031, P. R. China}
\email{luyao@email.ncu.edu.cn}
\subjclass[2020]{Primary 53C20, 53C24}



\keywords{Calabi curvature operator, K{\"a}hler Einstein manifold, Constant holomorphic sectional curvature.}

\begin{abstract}
In this paper, we study the Calabi curvature operator on K{\"a}hler manifolds. First, we prove that if the Calabi curvature operator on K{\"a}hler manifolds satisfies $\frac{{n\left(n + 1\right)}}{2}$-positive (nonnegative), $\frac{{n + 1}}{2}$-positive (nonnegative), and $\left( n-1 \right)$-positive (nonnegative), then the scalar curvature, Ricci curvature, and orthogonal Ricci curvature are positive (nonnegative), respectively. Second, we show that any compact K{\"a}hler Einstein manifold satisfying the condition
$$\lambda_1+\dots+\lambda_{\alpha}\ge -{\alpha}\theta(n,{\alpha})\bar\lambda,\; {\alpha}\le \frac{n}{2}$$
must have nonnegative constant holomorphic sectional curvature.
\end{abstract}

\maketitle
\section{introduction}
In a K{\"a}hler manifold $\left( {M,g,J} \right)$ of real dimension $2n$, let $V=T_{p} M$ denote the tangent space at a point $p\in M$. On the vector space $V$, one may define several types of curvature operators. For example, the curvature operator of the first kind \cite{tachibana_theorem_1974} (often simply called the curvature operator) is defined as
\begin{align*}
  \hat R:&{ \wedge ^2}\left( V \right) \to { \wedge ^2}\left( V \right) \hfill \\
  &{e_i} \wedge {e_j} \mapsto \frac{1}{2}\sum\limits_{k,l = 1}^{2n} {{R_{ijkl}}{e_k} \wedge {e_l}},
\end{align*}
where $\left\{e_i \right\}_{i=1}^{2n}$ is a local orthonormal frame. By restricting the curvature operator $\hat R$ to $\mathfrak{u}\left( V \right)  \subset \mathfrak{so}\left( {V} \right)  \cong { \wedge ^2}\left( V \right)$, one obtains the K{\"a}hler curvature operator. In \cite{petersen2021vanishing}, Petersen and Wink showed how the K{\"a}hler curvature operator affects harmonic $\left( p,q \right)$-forms, and obtained a rigidity conclusion on K{\"a}hler-Einstein manifolds.
Similarly, the curvature operator of the second kind is defined as
\begin{align*}
  \mathring{R}:&S_0^2\left( V \right) \to S_0^2\left( V \right) \hfill \\
  &{e_i} \odot {e_l} \mapsto \sum\limits_{j,k = 1}^{2n} {{R_{ijkl}}{e_j} \odot {e_k}},
\end{align*}
where $S_0^2$ denotes the space of traceless symmetric $2$-tensors on $V$, and $\odot$ denotes the symmetric tensor product.
In \cite{li2023kahler}, it was shown that a real $2n$ dimensional K{\"a}hler manifolds whose curvature operator of the second kind is $\frac{3}{2}\left( {{n^2} - 1} \right)$-nonnegative must have nonnegative constant holomorphic sectional curvature. Furthermore, if such a manifold is closed and its curvature operator of the second kind is $\frac{{3{n^3} - n + 2}}{{2n}}$-positive , then it is biholomorphic to $\mathbb{CP}^n$.

In the present paper, we focus on the Calabi curvature operator, defined as
\begin{align*}
\mathcal{C} :{ \odot ^2}{V^{1,0}}& \to { \odot ^2}{V^{1,0}},\\
{Z_A} \odot {Z_B}& \mapsto \sum\limits_{C,D = 1}^n {{R_{A\bar C\bar DB}}{Z_C} \odot {Z_D}},
\end{align*}
where $V^{1,0}$ denotes the $(1,0)$-part of the complexified tangent space.
Therefore, for any $S \in { \odot ^2}{V^{1,0}}$ expressed as $$S = \sum\limits_{A,B = 1}^n {{S_{AB}}{Z_A} \otimes {Z_B}},$$ we have
\begin{equation}
\label{equation1.1}
\mathcal{C}\left( S \right) = \sum\limits_{A,B,C,D = 1}^n {{S_{AB}}{R_{A\bar C\bar DB}}{Z_C} \otimes {Z_D}} .
\end{equation}
The Calabi operator was first introduced in \cite{calabi1960compact}, where Calabi and Vesentini computed its eigenvalues and multiplicities on several classes of K$\ddot{a}$hler manifolds.  Ogiue and Tachibana \cite{ogiue1980kaehler} showed the K{\"a}hler manifolds with positive Calabi operator have the rational cohomology of $\mathbb{CP}^n$, \cite{broder2025vanishing,wang_weitzenb$backslash$_2025} weaken the condition to $\frac{n}{2}$-positive and resulted in the same conclusion as above, and they also obtained some other interesting conclusions in their paper. The Calabi operator is also studied in these articles \cite{borel1960curvature,sitaramayya1973curvature}.

In this paper, let $\left\{ {{\lambda _\beta}} \right\}_{\beta = 1}^{\frac{{n\left( {n + 1} \right)}}{2}}$ denote the increasingly ordered eigenvalues of Calabi operator $\mathcal{C}$ on an $n$-dimensional K{\"a}hler manifold $\left( {M,g,J} \right)$. For any real number $\alpha >0$, we define the sum of the first $\alpha $ eigenvalues of $\mathcal{C}$ as
$${\lambda _1} +  \cdots  + {\lambda _\alpha }: = {\lambda _1} +  \cdots  + {\lambda _{\left[ \alpha  \right]}} + \left( {\alpha  - \left[ \alpha  \right]} \right){\lambda _{\left[ \alpha  \right] + 1}},$$
where $[\alpha ]$ is the integer part of $\alpha $.
We say that $M$ has $\alpha $-positive(resp., $\alpha $-nonnegative) Calabi operator if
$${\lambda _1} +  \cdots  + {\lambda _\alpha } > \left(  \ge  \right)0$$
holds at every point $p \in M$.

We first state a result concerning the curvature tensor.
\begin{theorem}
\label{theorem1.1}
Let $\left( {M,g,J} \right)$ be a K{\"a}hler manifold of complex dimension $n$.
\begin{enumerate}
\item If $M$ has $\frac{{n\left(n + 1\right)}}{2}$-positive (resp., nonnegative) Calabi curvature operator, then $M$ has positive (resp., nonnegative) scalar curvature.
\item If $M$ has $\frac{{n + 1}}{2}$-positive (resp., nonnegative) Calabi curvature operator, then $M$ has positive (resp., nonnegative) Ricci curvature.
\item If $M$ has $\left(n-1\right)$-positive (resp., nonnegative) Calabi curvature operator, then $M$ has positive (resp., nonnegative) orthogonal Ricci curvature.
\item If $M$ has $n$-positive (resp., nonnegative) Calabi curvature operator, then at every point it satisfies
$$2{Ric\left( {{e_b},{e_b}} \right) - R\left( {{e_b},J{e_b},{e_b},J{e_b}} \right)}> \left( \ge \right) 0.$$

\item If $M$ has $\alpha $-positive Calabi curvature operator for some $\alpha  \le n-1$, then $M$ is irreducible.
\end{enumerate}
\end{theorem}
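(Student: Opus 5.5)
The basic tool is Ky Fan's variational principle for the Hermitian operator $\mathcal{C}$ on $\odot^2 V^{1,0}$. Let $\{S_i\}$ be any orthonormal family in $\odot^2 V^{1,0}$ and let $0\le w_i\le 1$ be weights with $\sum_i w_i=\alpha$. Then
$$\lambda_1+\cdots+\lambda_\alpha\le \sum_i w_i\langle \mathcal{C}(S_i),S_i\rangle .$$
So $\alpha$-positivity (resp.\ nonnegativity) forces every such weighted sum to be positive (resp.\ nonnegative). Each part of the theorem will come from choosing a unitary frame $\{Z_A\}$ adapted to the quantity at hand, together with the orthonormal basis $\{Z_A\odot Z_A\}\cup\{\tfrac1{\sqrt2}Z_A\odot Z_B\}_{A<B}$, up to the normalization of $\odot$ used in \eqref{equation1.1}.

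The first step is to compute the diagonal entries from \eqref{equation1.1} using the Kähler symmetries $R_{A\bar C\bar D B}=R_{A\bar D\bar C B}$. One gets $\langle\mathcal{C}(Z_A\odot Z_A),Z_A\odot Z_A\rangle = c_1R_{A\bar AA\bar A}$, a multiple of the holomorphic sectional curvature $H(Z_A)$. For $A\neq B$ the normalized mixed element gives $c_2R_{A\bar AB\bar B}$, a bisectional curvature. The constants $c_1,c_2$ are fixed by the normalization; I will determine them first, consistently with the form of part (4). Also $\sum_{B}R_{A\bar AB\bar B}=Ric(Z_A,\bar Z_A)$ and $\sum_{B\neq A}R_{A\bar AB\bar B}=Ric^\perp(Z_A)$.

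Parts (1)--(4) then follow by choosing weights.

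For (1), take all weights equal to $1$. The sum is $\operatorname{tr}\mathcal{C}$, which is a combination of $\sum_{A,B}R_{A\bar AB\bar B}$ and $\sum_A H(Z_A)$. Averaging over $U(n)$ gives $\int H=\frac{2}{n(n+1)}\sum_{A,B}R_{A\bar AB\bar B}$. Since the trace is frame-independent, $\operatorname{tr}\mathcal{C}$ is a positive multiple of the scalar curvature.

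For (3), fix a unit $Z_1$ and put weight $1$ on the $n-1$ elements $\tfrac1{\sqrt2}Z_1\odot Z_B$, $B\ge2$. The sum is a positive multiple of $Ric^\perp(Z_1)$.

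For (4), add $Z_1\odot Z_1$ with weight $1$, for total weight $n$. The sum becomes a positive combination of $Ric^\perp$ and $H$, which is the stated $2Ric-H$ expression once the constants are inserted.

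For (2), put weight $t$ on $Z_1\odot Z_1$ and weight $s$ on each mixed element. Choose $tc_1=sc_2$, so that the sum is proportional to $H+Ric^\perp=Ric$, and check that $t+(n-1)s=\frac{n+1}{2}$ with $0\le s,t\le1$ (I expect $t=1$, $s=\tfrac12$). If the normalization does not give exactly this, I will instead average the (4)-type inequality over unit vectors in the plane orthogonal to $Z_1$ to trade $H$ for $Ric$.

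This bookkeeping of constants in (2) is the step I expect to be the main obstacle.

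For (5), suppose $M$ were locally reducible, $M=M_1\times M_2$ with complex dimensions $k$ and $n-k$, where $1\le k\le n-1$. Take $A$ tangent to $M_1$ and $B$ tangent to $M_2$. Every curvature component $R_{A\bar C\bar DB}$ with indices from both factors vanishes, so $\mathcal{C}(Z_A\odot Z_B)=0$. These give $k(n-k)\ge n-1\ge\alpha$ independent eigenvectors with eigenvalue $0$. Hence $\lambda_1+\cdots+\lambda_\alpha\le0$, contradicting $\alpha$-positivity, and $M$ is irreducible (via de Rham).
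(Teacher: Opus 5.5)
\textbf{Verdict.} Your proposal is correct. For parts (1), (3), (4) and (5) it is essentially the paper's argument.

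The paper uses the same orthonormal basis $\{\tfrac{1}{\sqrt2}Z_A\odot Z_B\}_{A<B}\cup\{\tfrac12 Z_A\odot Z_A\}$. Its diagonal entries are $2R(e_a,Je_a,e_b,Je_b)$ and $R(e_a,Je_a,e_a,Je_a)$, so $c_1=1$ and $c_2=2$ relative to holomorphic sectional and bisectional curvature. Consequently $\operatorname{tr}\mathcal{C}=\sum_{a,b}R(e_a,Je_a,e_b,Je_b)=s/2$ directly. Your $U(n)$-averaging in (1) is valid but unnecessary. In (5) the paper also uses the $n_1n_2\ge n-1$ mixed elements $\tfrac{1}{\sqrt2}Z_A\odot Z_C$. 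It only checks that their diagonal entries vanish, whereas you note $\mathcal{C}(Z_A\odot Z_C)=0$; either suffices.

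\textbf{Part (2): a different route.} With $c_1=1$, $c_2=2$ your guess $t=1$, $s=\tfrac12$ is exactly right, and the fallback is not needed.
\begin{itemize}
\item Weight $1$ on $\tfrac12 Z_A\odot Z_A$ and weight $\tfrac12$ on each $\tfrac1{\sqrt2}Z_A\odot Z_B$, $B\neq A$, has total weight $\tfrac{n+1}{2}$.
\item The weighted sum is $R(e_a,Je_a,e_a,Je_a)+\sum_{b\neq a}R(e_a,Je_a,e_b,Je_b)=Ric(e_a,e_a)$.
\item The fractional Ky Fan inequality then gives $\lambda_1+\cdots+\lambda_{\frac{n+1}{2}}\le Ric(e_a,e_a)$ in one line, for either parity of $n$. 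To justify it, write the weighted sum as $\sum_\beta \lambda_\beta p_\beta$ over an eigenbasis $\{E_\beta\}$, with $p_\beta=\sum_i w_i|\langle S_i,E_\beta\rangle|^2\in[0,1]$ and $\sum_\beta p_\beta=\alpha$.
\end{itemize}

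The paper instead sorts the $n$ numbers $2R(e_a,Je_a,e_b,Je_b)$ ($b\neq a$) and $R(e_a,Je_a,e_a,Je_a)$. It bounds $\lambda_1+\cdots+\lambda_{\frac{n+1}{2}}$ by the sum of the smallest $\tfrac{n+1}{2}$ of them, using a half weight when $n$ is even. It then shows this partial sum is at most their "unhalved" total $Ric(e_a,e_a)$ by a pairing argument, split into cases by the parity of $n$ and the position of the holomorphic-sectional entry in the ordering.

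\textbf{Comparison.} Your version is shorter and uniform in $n$. It also explains the threshold: the constraint $t=2s\le 1$ caps the total weight $(n+1)s$ at $\tfrac{n+1}{2}$. The paper's case analysis buys nothing extra. Its intermediate inequality (sum of the smallest $\tfrac{n+1}{2}$ diagonal entries $\le Ric$) follows immediately from your weighted identity, because any admissible weighting of the diagonal entries dominates the sum of the smallest ones.
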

By Bochner's classical result \cite{bochner1946vector}, positive Ricci curvature implies the vanishing of holomorphic forms. Hence we obtain:
\begin{corollary}
\label{corollary1.2}
 If an $n$-dimensional K{\"a}hler manifold admits $\frac{{n + 1}}{2}$-positive Calabi operator, then its Hodge numbers satisfy ${h^{m,0}}=0$ for all $1 \le m \le n$.
\end{corollary}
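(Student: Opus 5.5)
The corollary follows by combining Theorem \ref{theorem1.1}(2) with Bochner's vanishing theorem. The first step is therefore to check that the hypothesis of Theorem \ref{theorem1.1}(2) holds. The assumption is that $\lambda_1+\cdots+\lambda_{(n+1)/2}>0$ at every point $p\in M$. Theorem \ref{theorem1.1}(2) then gives $\mathrm{Ric}>0$ pointwise on $M$. Since the Hodge numbers $h^{m,0}$ are only well defined for compact $M$, we take compactness as an implicit standing assumption here, as is standard in this context. On a compact manifold, $\mathrm{Ric}$ is then bounded below by a positive constant.

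The second step is the Bochner argument for holomorphic $(m,0)$-forms. Let $\eta$ be a holomorphic $(m,0)$-form with $1\le m\le n$. On a compact Kähler manifold, holomorphic $(m,0)$-forms are harmonic, and the Weitzenböck formula for them reads
\[
\frac12\Delta|\eta|^2=|\nabla\eta|^2+\sum_{i=1}^m \sum \mathrm{Ric}(\cdot,\cdot)\text{-terms}.
\]
In a unitary frame that diagonalizes $\mathrm{Ric}$ at a point with eigenvalues $\rho_A$, the curvature term equals
\[
\sum_{A_1<\cdots<A_m}(\rho_{A_1}+\cdots+\rho_{A_m})\,|\eta_{A_1\cdots A_m}|^2 .
\]
Only the Ricci tensor enters this term, because $\eta$ has type $(m,0)$.

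Now integrate the formula over $M$. The Laplacian term integrates to zero, so both $\nabla\eta\equiv 0$ and the curvature term vanishes. Because $\mathrm{Ric}>0$, every sum $\rho_{A_1}+\cdots+\rho_{A_m}$ is strictly positive, and therefore $\eta\equiv 0$. Hence $h^{m,0}=\dim H^0(M,\Omega^m)=0$ for every $1\le m\le n$.

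There is no real obstacle, since all of the content lies in Theorem \ref{theorem1.1}(2). The only points needing care are fixing the sign convention of the Weitzenböck curvature term and making the compactness assumption explicit.
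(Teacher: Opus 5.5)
Your proposal is correct and follows the paper's own argument: Theorem~\ref{theorem1.1}(2) gives $\mathrm{Ric}>0$, and Bochner's vanishing theorem then forces $h^{m,0}=0$ for $1\le m\le n$; the paper simply cites Bochner, while you write out the Weitzenb\"ock integration (where pointwise positivity of $\mathrm{Ric}$ already suffices). Your remark that compactness must be assumed is apt: the paper states the corollary without it, but both Bochner's theorem and the meaning of the Hodge numbers $h^{m,0}$ require it.
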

In \cite{ni2018comparison}, Ni and Zheng showed that positive orthogonal Ricci curvature forces the vanishing of ${h^{n - 1,0}}$ and ${h^{2,0}}$ on compact K{\"a}hler manifolds. Therefore:
\begin{corollary}
\label{corollary1.3}
Let $M$ be a compact K{\"a}hler manifold of dimension $n\geq2$. If its Calabi operator is $\left(n-1\right)$-positive, then the Hodge numbers ${h^{n - 1,0}}$ and ${h^{2,0}}$ both vanish.
\end{corollary}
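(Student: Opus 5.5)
The plan is to combine part (3) of Theorem~\ref{theorem1.1} with the vanishing theorem of Ni and Zheng \cite{ni2018comparison}, just as Corollary~\ref{corollary1.2} follows from part (2) together with Bochner's theorem. No new curvature computation should be needed.

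First I will check that the hypothesis is meaningful. Since $n\ge 2$, we have $\alpha=n-1\ge 1$. Hence $(n-1)$-positivity of $\mathcal{C}$ says that $\lambda_1+\cdots+\lambda_{n-1}>0$ at every point of $M$, and this is exactly the hypothesis of Theorem~\ref{theorem1.1}(3). That result then gives, at every point $p\in M$ and for every unit $X\in T_pM$,
$$Ric^{\perp}(X,X)=Ric(X,X)-R(X,JX,JX,X)>0 .$$
So $M$ has positive orthogonal Ricci curvature everywhere.

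Next I will invoke \cite{ni2018comparison}. Ni and Zheng prove that a compact K\"ahler manifold with $Ric^{\perp}>0$ satisfies $h^{2,0}=0$ and $h^{n-1,0}=0$. Compactness of $M$ is assumed in the statement, so their theorem applies directly and yields both vanishings.

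The only point needing care is matching conventions, so that the quantity Theorem~\ref{theorem1.1}(3) controls is the orthogonal Ricci curvature of \cite{ni2018comparison}, with the same normalization and sign of $R(X,JX,JX,X)$. I will verify this against the definitions in \cite{ni2018comparison} and the curvature conventions fixed in the proof of Theorem~\ref{theorem1.1}. Once the conventions agree, no further work is needed. In the boundary case $n=2$ the two conclusions coincide, since $n-1=1$ and $h^{n-1,0}=h^{1,0}$, so the statement reads $h^{1,0}=h^{2,0}=0$. This is still covered by Ni--Zheng.
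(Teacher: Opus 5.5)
Your proposal is correct and is exactly the paper's argument: Theorem~\ref{theorem1.1}(3) gives $Ric^{\perp}>0$ at every point, and the Ni--Zheng vanishing theorem for compact K\"ahler manifolds then gives $h^{n-1,0}=h^{2,0}=0$. One small slip: the two conclusions coincide when $n=3$ (where $n-1=2$), not when $n=2$; for $n=2$ they are the two distinct statements $h^{1,0}=0$ and $h^{2,0}=0$, as you then correctly write.
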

Finally, we establish a rigidity result for compact K{\"a}hler-Einstein manifolds. Note that K{\"a}hler manifolds with $(n-1)$-positive Calabi curvature operator and harmonic Weyl tensor must be K{\"a}hler-Einstein by (5) of Theorem~\ref{theorem1.1}.
\begin{theorem}
\label{theorem1.4}
Let $\left( {M,g,J} \right)$ be an $n$-dimensional compact K{\"a}hler-Einstein manifold. If the smallest $\beta $ eigenvalues of Calabi curvature operator satisfy
\begin{equation}
\label{equation1.2}
{\lambda _1} +  \cdots  + {\lambda _\beta } \ge  - \beta \theta \bar \lambda, \ \ \beta \le \frac{n}{2}
\end{equation}
where
$$\theta \left( {n,\beta } \right) = \frac{{ - 2N \beta - \left( {n - 1} \right)N + \left( {n + 3} \right)\beta }}{{2N \beta  + 2nN - 2\left( {n + 2} \right)\beta}},\ \ \ N = \frac{{n\left( {n + 1} \right)}}{2},$$
then $M$ have nonnegative constant holomorphic sectional curvature.
\end{theorem}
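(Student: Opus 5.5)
The plan is the standard Bochner argument, applied to the Weyl-type part of the curvature. On a compact Kähler–Einstein manifold the Ricci form is parallel. By the second Bianchi identity, $\nabla^{*}\nabla R$ is then a quadratic expression $Q(R)$ in the curvature, and this is the identity I would integrate.

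The strategy is to split $R = R_0 + W$. Here $R_0 = \frac{s}{n(n+1)}(g\owedge g)_{\text{Kähler}}$ is the constant-holomorphic-sectional-curvature part, and $W$ is the Bochner tensor. Bochner-flat Kähler–Einstein means constant holomorphic sectional curvature. The aim is to show $W\equiv 0$ and $s\ge 0$. Since $\nabla R_0=0$, integrating $\langle \nabla^*\nabla W,W\rangle$ gives
\[
0 \le \int_M |\nabla W|^2 = -\int_M \langle Q(R),W\rangle .
\]
So it suffices to show pointwise that $\langle Q(R),W\rangle \ge 0$ under \eqref{equation1.2}, with equality forcing $W=0$.

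\textbf{Step 1: the quadratic term.} I would compute $\langle Q(R),W\rangle$ in a unitary frame. Written through the Calabi operator $\mathcal C$ on $\odot^2V^{1,0}$, the Kähler curvature tensor is a symmetric operator $R_{A\bar C\bar D B}$. Its identity part has eigenvalue $\bar\lambda = \frac{1}{N}\operatorname{tr}\mathcal C$, which is proportional to $s$ with $N=\frac{n(n+1)}2$. The term $\langle Q(R),W\rangle$ then splits into a linear-in-$\bar\lambda$ piece, $c_1\bar\lambda|W|^2$, and a cubic piece. The cubic piece looks like $\operatorname{tr}(\mathcal C_W^3)$ plus a term of the form $\sum W_{A\bar B C\bar D}W_{B\bar E D\bar F}W_{E\bar A F\bar C}$. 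The aim is to rewrite everything as a sum over eigenvalues $\lambda_\beta$ of $\mathcal C$ with weights in terms of $\lambda_\beta-\bar\lambda$. I would follow the Petersen–Wink / Li approach of expressing the curvature term in the Bochner formula as $\sum_\beta \lambda_\beta |S_\beta\cdot W|^2$. Here $S_\beta$ are the unit eigenvectors of $\mathcal C$ acting on $W$ through the $\mathfrak{u}(n)$-type derivation action.

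\textbf{Step 2: the eigenvalue lemma.} Once the term reads $\sum_\beta \lambda_\beta |S_\beta W|^2$, I need the weight estimate $|S_\beta W|^2 \le \frac{1}{\beta}\sum_\gamma |S_\gamma W|^2$ in a suitable averaged form, for at most $\beta\le n/2$ of the directions. I also need the exact value of $\sum_\gamma|S_\gamma W|^2$ as a multiple of $|W|^2$, obtained from Casimir-type identities for $\odot^2 V^{1,0}$. The standard lemma then applies: if $\lambda_1+\dots+\lambda_\beta\ge -\beta\theta\bar\lambda$ and each weight is at most $\frac1\beta$ of the total, then $\sum \lambda_\gamma w_\gamma \ge$ (a linear combination of $\bar\lambda$ and the threshold) $\times$ total weight. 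The explicit $\theta(n,\beta)$ should be exactly the value that makes the resulting coefficient of $|W|^2$ vanish. That is, the condition makes $\langle Q(R),W\rangle \ge \kappa\,\bar\lambda |W|^2$ with $\kappa\ge0$. The numerator $-2N\beta-(n-1)N+(n+3)\beta$ presumably comes from combining the trace term $N\bar\lambda$, the multiplicity constant of the Casimir, and the bound on individual weights.

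\textbf{Step 3: sign of $\bar\lambda$ and conclusion.} The sign of $\bar\lambda$ also has to be handled, since $\theta$ is negative for relevant $n,\beta$. If $\bar\lambda<0$, condition \eqref{equation1.2} forces $\lambda_1+\dots+\lambda_\beta\ge$ a positive multiple of $|\bar\lambda|$. This should contradict $\sum\lambda = N\bar\lambda<0$ together with the ordering: the average of the smallest $\beta$ eigenvalues is at most the overall average $\bar\lambda$. Hence $\bar\lambda\ge0$, i.e.\ $s\ge0$. Then $\langle Q(R),W\rangle\ge0$ gives $\nabla W=0$ and equality in the weight estimate. From there I would argue $W=0$, either via equality forcing all $\lambda_\gamma$ in the support equal, or by using that $\bar\lambda>0$ gives strict inequality, with the Ricci-flat case $\bar\lambda=0$ giving $\lambda_1+\dots+\lambda_\beta\ge0$ and $\operatorname{tr}=0$. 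That yields constant holomorphic sectional curvature $\ge0$.

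The main obstacle is Step 1–2: getting the precise algebraic identity expressing the Bochner reaction term through the Calabi operator's spectrum, with the sharp weight bound $\frac1\beta$ valid only for $\beta\le n/2$. This is where the exact formula for $\theta$ must emerge, and I would first test it on the model $\mathbb{CP}^n$ and product metrics to calibrate constants.
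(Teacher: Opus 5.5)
The gap is in Steps 1--2. The Bochner reaction term is \emph{not} of the form $\sum_\alpha\lambda_\alpha|S^\alpha W|^2+c_1\bar\lambda|W|^2$. When the Einstein formula for $\langle\Delta R,R\rangle$ is rewritten through $\mathcal C$ (Propositions~\ref{pro4.1}, \ref{pro4.2} and Lemma~\ref{lemma4.5} with $m=1$), a term $8\operatorname{tr}\mathcal C^3$ survives. Let $W=R^1$ be the Bochner tensor and $x_\alpha=\lambda_\alpha-\bar\lambda$, so that $\sum_\alpha x_\alpha=0$, $|W|^2=4\sum_\alpha x_\alpha^2$ and $Ric^W=0$. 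Then \eqref{equation4.1} reads
\[
\langle\Delta R,R\rangle=2\sum_\alpha\lambda_\alpha|S^\alpha W|^2+8\sum_\alpha x_\alpha^3-4(n-1)\bar\lambda\sum_\alpha x_\alpha^2 .
\]
The cubic term $8\sum_\alpha x_\alpha^3=8\operatorname{tr}(\mathcal C-\bar\lambda\,\mathrm{id})^3$ is indefinite. Lemma~\ref{lemma2.5} does not control it, because its natural weights $x_\alpha^2$ can have sum-to-max ratio close to $1$. You can apply Lemma~\ref{lemma2.5} to the first term, using $\sum_\alpha|S^\alpha W|^2=n|W|^2$ and the ratio $n/2$ from Lemma~\ref{lemma4.3}; that part you anticipated correctly. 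This leaves
\[
\langle\Delta R,R\rangle\ge 8\sum_\alpha x_\alpha^3-\bigl(8n\theta+4(n-1)\bigr)\bar\lambda\sum_\alpha x_\alpha^2 .
\]
Your recipe is to choose $\theta$ so that the coefficient of $|W|^2$ vanishes. That gives $\theta=-\frac{n-1}{2n}$, not the stated $\theta(n,\beta)$. It also leaves $8\sum_\alpha x_\alpha^3$, which is negative for $x=(-a,\frac{a}{N-1},\dots,\frac{a}{N-1})$ with $a>0$. So the bound $\langle Q(R),W\rangle\ge\kappa\bar\lambda|W|^2$ you aim for in Step 2 does not follow.

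What is missing is a genuinely cubic eigenvalue estimate. The paper proceeds in three steps:
\begin{itemize}
\item It converts the partial-sum hypothesis into the pointwise bound $\lambda_1\ge-D\bar\lambda$, where $D=\frac{(N-1)\beta\theta+N(\beta-1)}{N-\beta}$ (Lemma~\ref{lemma5.2}).
\item It shifts to $\mu_\alpha=\lambda_\alpha+D\bar\lambda\ge0$, which satisfy $\sum_\alpha\mu_\alpha=(1+D)N\bar\lambda$.
\item It minimizes $\sum_\alpha\mu_\alpha^3-c\,\bar\lambda\sum_\alpha\mu_\alpha^2$ via Lemma~\ref{lemma5.1}.
\end{itemize}
The formula for $\theta(n,\beta)$ is exactly the normalization $3D+n\theta+\frac{n+5}{2}=\frac{2N-1}{N-1}(1+D)$. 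In the notation above this says $-(8n\theta+4(n-1))=8(1+D)\frac{N-2}{N-1}>0$. In other words, you need a \emph{positive} multiple of $\bar\lambda|W|^2$, large enough to dominate $8\sum_\alpha x_\alpha^3$ given $x_\alpha\ge-(1+D)\bar\lambda$.

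This also changes the equality analysis. The minimum is attained both at $\lambda_\alpha\equiv\bar\lambda$ and at $(-D\bar\lambda,\frac{N+D}{N-1}\bar\lambda,\dots,\frac{N+D}{N-1}\bar\lambda)$. The second configuration must be ruled out separately. The paper uses the equality case of \eqref{equation2.3} for $\beta>1$, and positivity of $\mathcal C$ together with Siu--Yau and Matsushima for $\beta=1$.

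Finally, your conclusion $\bar\lambda\ge0$ in Step 3 is correct, but not for the reason you give: if $\theta,\bar\lambda<0$ then $-\beta\theta\bar\lambda<0$. The right argument is $-\theta\bar\lambda\le\frac1\beta(\lambda_1+\dots+\lambda_\beta)\le\bar\lambda$, i.e.\ $(1+\theta)\bar\lambda\ge0$. This gives $\bar\lambda\ge0$ because $1+\theta=\frac{(n+1)(N-\beta)}{2N\beta+2nN-2(n+2)\beta}>0$.
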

When $\beta =1$, results from \cite{siu1980compact} and \cite{matsushima1957structure} imply that a K{\"a}hler-Einstein manifold with a positive Calabi operator must have positive constant holomorphic curvature. Theorem~\ref{theorem1.4}, however, allows negative eigenvalues when $\beta >1$. For instance, taking $n=12$ and $\beta =6$ gives $\theta \left( {12,6} \right) = - \frac{71}{110}$; one may then choose eigenvalues such as  ${\lambda _1} =  - \frac{51}{40} \bar \lambda,{\lambda _2} = \cdots= {\lambda _N} = \frac{453}{440} \bar \lambda$, and condition (1.2) is still satisfied.

\section{Preliminaries}
\subsection{Notation and Conventions}
 In the following, $\left( V,g,J \right)$ denotes a $2n$-dimensional Euclidean vector space, where $J$ is a complex structure on $V$, i.e., for $u,v \in V$, it satisfies
$$J^2=-id,g\left( {u,v} \right) = g\left( {Ju,Jv} \right).$$
Under these conditions, there  exists an orthonormal basis $\left\{ {{e_i}} \right\}_{i = 1}^{2n}$ of $V$ such that
\begin{equation}
\label{equation2.1}
J{e_a} = {e_{a + n}},J{e_{a + n}} =  - {e_a},a = 1,2, \cdots ,n.
\end{equation}
Let $V^*$ be the dual vector space of $V$. We identify a vector $u\in V$ with its dual $u^* \in V^*$ via the inner product $g$, such that for all $v\in V$,
$$g\left( {u,v} \right)={u^*}\left( {v} \right).$$
Consequently, we can identify  $End \left( V \right)$ with ${ \otimes ^2}{V } = {\wedge} ^{2} V \oplus {\odot} ^{2} V$ through the following correspondences:
$$\left( {u \otimes v} \right)\left( {w} \right) = g\left( {u,w} \right)v,$$
and
$$\left( {u \wedge v} \right)\left( {w} \right) = g\left( {u,w} \right)v - g\left( {v,w} \right)u,  \left( {u \odot v} \right)\left( {w} \right) = g\left( {u,w} \right)v + g\left( {v,w} \right)u.$$
For a tensor $T \in \otimes ^k V$, its (squared) tensor norm is defined as
$${\left| T \right|^2} = \sum\limits_{{i_1} \cdots {i_k}}^{} {{T_{{i_1} \cdots {i_k}}}{T_{{i_1} \cdots {i_k}}}}  ,$$
where ${T_{{i_1} \cdots {i_k}}} = T\left( {{e_{{i_1}}}, \cdots ,{e_{{i_k}}}} \right)$ are the components of $T$ with respect to the orthonormal basis $\left\{ {{e_i}} \right\}_{i = 1}^{2n}$ of $V$.

Now, for a symmetric tensor $S \in {\odot} ^2 V$, we can define a linear operator
\begin{align*}
  S:{ \otimes ^k}V &\to { \otimes ^k}V, \\
    T &\mapsto S(T),
\end{align*}
by
$$\left[ {S(T)} \right]\left( {{e_{{i_1}}}, \cdots ,{e_{{i_k}}}} \right): = \sum\limits_{r = 1}^k {T\left( {{e_{{i_1}}}, \cdots ,S\left( {{e_{{i_r}}}} \right) \cdots ,{e_{{i_k}}}} \right)} .$$

\subsection{Complexification}
Let ${V^{\mathbb{C}}} = V{ \otimes _{\mathbb{R}}{\mathbb{C}}}$ be the complexification of $\left( V,g,J \right)$, and extend all $\mathbb{R}$-linear maps on $V$ to be $\mathbb{C}$-linear on  ${V^{\mathbb{C}}}$. Then ${V^{\mathbb{C}}}$ decomposes as
$${V^{\mathbb{C}}} = {V^{1,0}} \oplus {V^{0,1}},$$
where ${V^{1,0}}$ and ${V^{0,1}}$ are the eigenspaces of $J$ corresponding to eigenvalues $\sqrt { - 1} $ and $-\sqrt { - 1} $, respectively. It is straightforward to verify that the vectors
$${Z_A} = \frac{1}{{\sqrt 2 }}\left( {{e_a} - \sqrt { - 1} J{e_a}} \right),\overline {{Z_A}}  = \frac{1}{{\sqrt 2 }}\left( {{e_a} + \sqrt { - 1} J{e_a}} \right),a=1,\cdots,n$$
form bases of ${V^{1,0}} $ and $ {V^{0,1}}$, respectively.

For a real tensor $T$, we do not distinguish between $T$ and its complexification $T^{\mathbb{C}}$. For example, the inner product $g$, extended complex-bilinearly on ${V^{\mathbb{C}}}$ and also denoted by $g$,  is symmetric, and we have
\begin{align*}
{g}\left( {{Z_A},{Z_B}} \right) =& {g}\left( {\frac{1}{{\sqrt 2 }}\left( {{e_a} - \sqrt { - 1} J{e_a}} \right),\frac{1}{{\sqrt 2 }}\left( {{e_b} - \sqrt { - 1} J{e_b}} \right)} \right)\\
 =& \frac{1}{2}\left( {g\left( {{e_a},{e_b}} \right) - g\left( {J{e_a},J{e_b}} \right) - \sqrt { - 1} g\left( {J{e_a},{e_b}} \right) - \sqrt { - 1} g\left( {{e_a},J{e_b}} \right)} \right)\\
 =& 0
 \end{align*}
and
\begin{align*}
{g}\left( {{Z_A},\overline {{Z_B}} } \right) =& {g}\left( {\frac{1}{{\sqrt 2 }}\left( {{e_a} + \sqrt { - 1} J{e_a}} \right),\frac{1}{{\sqrt 2 }}\left( {{e_b} - \sqrt { - 1} J{e_b}} \right)} \right)\\
 =& \frac{1}{2}\left( {g\left( {{e_a},{e_b}} \right) + g\left( {J{e_a},J{e_b}} \right) - \sqrt { - 1} g\left( {J{e_a},{e_b}} \right) - \sqrt { - 1} g\left( {{e_a},J{e_b}} \right)} \right)\\
 =& {\delta _{AB}}.
\end{align*}
Moreover, the norm of a real tensor $T$ coincides with the norm of its complexification:
$$\left| T \right|_\mathbb{C}^2 = g\left( {T,\overline T } \right) = \sum\limits_{{i_1} \cdots {i_k}}^{} {{T_{{i_1} \cdots {i_k}}}\overline {{T_{{i_1} \cdots {i_k}}}} }  = \sum\limits_{{i_1} \cdots {i_k}}^{} {{T_{{i_1} \cdots {i_k}}}{T_{{i_1} \cdots {i_k}}}}.$$
In this paper, we stipulate the following conventions for component indices of complex tensors  $T$:
$${T_{ijkl's't' \cdots }} = T\left( {{e_i},{e_j},{e_k},J{e_l},J{e_s},J{e_t}, \cdots } \right),1 \le i,j,k,l,s,t \le 2n, $$
$${T_{ABC\overline D \overline E \overline F  \cdots }} = T\left( {{Z_A},{Z_B},{Z_C},\overline {{Z_D}} ,\overline {{Z_E}} ,\overline {{Z_F}} , \cdots } \right),1 \le A,B,C,D,E,F \le n, $$
$${T_{abcd'e'f' \cdots }} = T\left( {{e_a},{e_b},{e_c},J{e_d},J{e_e},J{e_f}, \cdots } \right),1 \le a,b,c,d,e,f \le n. $$
The index sets  $\left\{ a,b,c,d,e,f,\cdots \right\}$ and $\left\{ A,B,C,D,E,F,\cdots\right\}$ are related via the correspondence ${Z_A} = \frac{1}{{\sqrt 2 }}\left( {{e_a} - \sqrt { - 1} J{e_a}} \right)$.

\begin{remark}
\label{remark2.1}
Let $\varphi$ be a tensor in $\otimes^2 {V^ {\mathbb{C}}}$. Then
$$\sum\limits_{i = 1}^{2n} {\varphi \left( {{e_i},{e_i}} \right) = \sum\limits_{A = 1}^n {\left( {\varphi \left( {{Z_A},\overline {{Z_A}} } \right) + \varphi \left( {\overline {{Z_A}} ,{Z_A}} \right)} \right)} }. $$
\end{remark}

\begin{remark}
\label{remark2.2}
Let $T$  be a real tensor on $\left( V,g,J \right)$, and let $\left\{ {{e_i}} \right\}_{i = 1}^{2n}$ be an orthonormal basis of  $V$ satisfying \eqref{equation2.1}. Then for ${Z_A} \odot {Z_B} \in { \odot ^2}{V^{1,0}}$, one has
\begin{align*}
\overline {{{\left( {\left( {{Z_A} \odot {Z_B}} \right)T} \right)}_{{i_1} \cdots {i_k}}}}  =& \sum\limits_{j = 1}^k {\sum\limits_{t = 1}^{2n} {\overline {{{\left( {{Z_A} \odot {Z_B}} \right)}_{{i_j}t}}{T_{{i_1} \cdots t \cdots {i_k}}}} } } \\
 =& \sum\limits_{j = 1}^k {\sum\limits_{t = 1}^{2n} {\overline {{{\left( {{Z_A} \odot {Z_B}} \right)}_{{i_j}t}}} {T_{{i_1} \cdots t \cdots {i_k}}}} } \\
 =& \sum\limits_{j = 1}^k {\sum\limits_{t = 1}^{2n} {{{\left( {\overline {{Z_A}}  \odot \overline {{Z_B}} } \right)}_{{i_j}t}}{T_{{i_1} \cdots t \cdots {i_k}}}} } \\
 =& {\left( {\left( {\overline {{Z_A}}  \odot \overline {{Z_B}} } \right)T} \right)_{{i_1} \cdots {i_k}}}.
\end{align*}
\end{remark}

\begin{lemma}
\label{lemma2.3}
Let $S \in {\odot} ^2 V^{1,0}$, then
$$S_{i'j'} =  - S_{ij} ,S_{ij'} = S_{i'j} ,$$
that is to say if we see $S \in End\left( V^ {\mathbb{C}} \right)$, then
$$S \circ J + J \circ S = 0.$$
\end{lemma}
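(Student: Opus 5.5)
By linearity it suffices to treat $S=Z_A\odot Z_B$. The plan is to prove the endomorphism identity $S\circ J+J\circ S=0$ first, and then read off the component identities from it.

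\emph{Step 1: the basic identity.} For any $u\in V^{1,0}$, $v\in V^{\mathbb{C}}$ and $w\in V^{\mathbb{C}}$ we have $(u\otimes v)(w)=g(u,w)v$. Two facts do the work:
\begin{itemize}
\item $J$ is $g$-skew ($g(u,Jw)=-g(Ju,w)$), since $J^2=-\mathrm{id}$ and $J$ is orthogonal.
\item $J$ acts on $V^{1,0}$ as multiplication by $\sqrt{-1}$.
\end{itemize}
From these,
$$
(u\otimes v)(Jw)=g(u,Jw)v=-g(Ju,w)v=-\sqrt{-1}\,g(u,w)v,
$$
while for $v\in V^{1,0}$,
$$
J\big((u\otimes v)(w)\big)=g(u,w)Jv=\sqrt{-1}\,g(u,w)v.
$$
Adding the two gives $(u\otimes v)\circ J+J\circ(u\otimes v)=0$.

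\emph{Step 2: apply to $S$.} Since $Z_A\odot Z_B=Z_A\otimes Z_B+Z_B\otimes Z_A$ as endomorphisms, with both factors in $V^{1,0}$, Step 1 gives $S\circ J+J\circ S=0$.

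\emph{Step 3: translate to components.} Using the conventions $S_{ij}=S(e_i,e_j)=g(S(e_i),e_j)$ (up to the fixed ordering convention for the identification), we compute
$$
S_{i'j'}=g\big(S(Je_i),Je_j\big)=-g\big(JS(e_i),Je_j\big)=-g\big(S(e_i),e_j\big)=-S_{ij},
$$
using Step 2 and then orthogonality of $J$. Similarly,
$$
S_{ij'}=g\big(S(e_i),Je_j\big)=-g\big(JS(e_i),e_j\big)=g\big(S(Je_i),e_j\big)=S_{i'j}.
$$
Here the skewness of $J$ is used first, then Step 2. The same identities can be checked directly from the explicit components of $Z_a$ in the basis (2.1), but the endomorphism route is cleaner.

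The main obstacle is bookkeeping. One must keep the complex-bilinear (not Hermitian) extension of $g$ throughout. One must also check that the convention $(u\otimes v)(w)=g(u,w)v$ matches the component convention $T_{ij}=T(e_i,e_j)$; since $S$ is symmetric, the order of the indices does not matter in the end.
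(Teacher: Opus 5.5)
Your proof is correct, and it argues in the opposite direction from the paper. The paper works in coordinates. It expands
\[
S=\sum S_{AB}Z_A\otimes Z_B=\tfrac12\sum S_{AB}\bigl(e_a\otimes e_b-Je_a\otimes Je_b-\sqrt{-1}\,e_a\otimes Je_b-\sqrt{-1}\,Je_a\otimes e_b\bigr)
\]
in the adapted real basis and reads off $S_{ab}=-S_{a'b'}=\frac12S_{AB}$ and $S_{ab'}=S_{a'b}=-\frac{\sqrt{-1}}{2}S_{AB}$. It then treats $S\circ J+J\circ S=0$ as a restatement of these.

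You instead prove the endomorphism identity first, without a basis. Your Step 1 uses only that $J$ is $g$-skew under the complex-bilinear extension of $g$ and that $J=\sqrt{-1}$ on $V^{1,0}$; you then recover the component identities by one application of skewness or orthogonality of $J$.

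Your route has three advantages:
\begin{itemize}
\item It handles all $1\le i,j\le 2n$ at once. The paper's computation is written only for $a,b\le n$, and the cases with an index above $n$ are left to follow from $Je_{a+n}=-e_a$.
\item It shows that symmetry of $S$ plays no role. Only the fact that both tensor factors lie in $V^{1,0}$ matters, so the identity holds on all of $V^{1,0}\otimes V^{1,0}$.
\item It is independent of the particular unitary frame.
\end{itemize}
The paper's computation, in exchange, gives the explicit values of the real components in terms of the complex coefficients $S_{AB}$, which is slightly more than the lemma states.
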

\begin{proof}
Take an orthonormal basis $\{Z_A\}$ of $V^{1,0}$.  Write $S$ as
$$S = \sum\limits_{A,B = 1}^n {{S_{AB}}{Z_A} \otimes {Z_B}}  = \frac{1}{2}\sum\limits_{a,b = 1}^n {{S_{AB}}\left( {{e_a} \otimes {e_b} - J{e_a} \otimes J{e_b} - \sqrt { - 1} {e_a} \otimes J{e_b} - \sqrt { - 1} J{e_a} \otimes {e_b}} \right)} $$
Thus
$$S_{ab}  =  - S_{a'b'} = \frac12S_{AB},\ \ \ S_{ab'}  = S_{a'b} = - \frac12\sqrt { - 1} S_{AB}.$$
\end{proof}

\subsection{Algebraic curvature tensor}
Let $T \in { \otimes ^4}{V^ * }$. We say that $T$ is an algebraic curvature tensor on $V$ if it satisfies the following symmetries:
$${T_{ijkl}} =  - {T_{jikl}} =  - {T_{ijlk}} = {T_{klij}},$$
and the first Bianchi identity:
$${T_{ijkl}} + {T_{iklj}} + {T_{iljk}} = 0.$$
If, in addition, $T$ satisfies
$$T\left( {J{e_i},J{e_j},{e_k},{e_l}} \right) = T\left( {{e_i},{e_j},{e_k},{e_l}} \right),$$
then we call $T$  a K{\"a}hler algebraic curvature tensor. Considering the complexification of $T$, it is straightforward to see that
$$T\left( {{Z_A},{Z_B}, \cdot , \cdot } \right) = T\left( { \cdot , \cdot ,\overline {{Z_C}} ,\overline {{Z_D}} } \right) = 0$$
and
$$T\left( {{Z_A},\overline {{Z_C}} ,\overline {{Z_D}} ,{Z_B}} \right) = T\left( {{Z_A},\overline {{Z_D}} ,\overline {{Z_C}} ,{Z_B}} \right).$$
\begin{remark}
\label{remark2.4}
In K{\"a}hler geometry, the K{\"a}hler algebraic curvature tensor is originally defined on ${V^{1,0}} \times {V^{0,1}} \times {V^{0,1}} \times {V^{1,0}}$. Here, however, we may consider it as the result of extending it to the entire product  ${\left({V^\mathbb{C}}\right)}^4$.
\end{remark}

The Ricci tensor of $T$, denoted by  $Ric^T$, is defined as the $\left( 2,4 \right)$ contraction of $T$. That is,
$${R^T_{ik}} = Ric^T\left( {{e_i},{e_k}} \right) = \sum\limits_{j = 1}^{2n} {T\left( {{e_i},{e_j},{e_k},{e_j}} \right)}  = \sum\limits_{j = 1}^{2n} {{T_{ijkj}}}.$$
If $T$ is a K${\ddot a}$hler algebraic curvature tensor, then
\begin{equation}
\begin{aligned}
Ric^T\left( {{e_i},{e_k}} \right) =& \sum\limits_{j = 1}^{2n} {T\left( {{e_i},{e_j},{e_k},{e_j}} \right)} \\
 =& \sum\limits_{j = 1}^n {\left( {T\left( {{e_i},{e_j},{e_k},{e_j}} \right) + T\left( {{e_i},J{e_j},{e_k},J{e_j}} \right)} \right)} \\
 =& \sum\limits_{j = 1}^n {\left( {T\left( {{e_i},{e_j},J{e_k},J{e_j}} \right) - T\left( {{e_i},J{e_j},J{e_k},{e_j}} \right)} \right)} \\
 =& \sum\limits_{j = 1}^n {T\left( {{e_i},J{e_k},{e_j},J{e_j}} \right)} .
\end{aligned}
\end{equation}

Note that every K{\"a}hler algebraic curvature tensor $T$ defines a self-adjoint operator
 $${\mathcal{C}^T}\colon { \odot ^2}{V^{1,0}}\rightarrow { \odot ^2}{V^{1,0}}$$
 via
$${\mathcal{C}^T}\left( {{Z_A} \odot {Z_B}} \right) = \sum\limits_{C,D = 1}^n {{T_{A\overline C \overline D B}}{Z_C} \odot {Z_D}},$$
and similarly defines a self-adjoint operator $\overline{\mathcal{C}^T}$ on ${ \odot ^2}{V^{0,1}}$,
$$\overline {\mathcal{C}^T} \left( {\overline {{Z_A}}  \odot \overline {{Z_B}} } \right) = \sum\limits_{C,D = 1}^n {{T_{\overline A CD\overline B }}\overline {{Z_C}}  \odot \overline {{Z_D}} }. $$
We may regard the operator $\overline {\mathcal{C}^T}$ as the complex conjugate of ${\mathcal{C}^T}$, i.e.,
$$\overline {\mathcal{C}^T} \left( {\overline {{Z_A}}  \odot \overline {{Z_B}} } \right): = \overline {{\mathcal{C}^T}\left( {{Z_A} \odot {Z_B}} \right)},$$
 therefore, $\overline {\mathcal{C}^T}$ and ${\mathcal{C}^T}$ have the same eigenvalues, and their eigenvectors are conjugate to each other.

Let $R$ be a K{\"a}hler algebraic curvature tensor on $\left( M^n,g,J \right)$. Denote the corresponding self-adjoint operator ${\mathcal{C}^R}$ ( simply  as $\mathcal{C}$)  defined as above. For any orthonormal basis $\left\{ {{S^\alpha }} \right\}_{\alpha  = 1}^N$ of ${ \odot ^2}{V^{1,0}}$, $N = \frac{{n(n + 1)}}{2}$, and for any tensor $T$, we define
$${T^{{ \odot ^2}{V^{1,0}}}} = \sum\limits_{\alpha  = 1}^N {{S^\alpha }\left( T \right) \otimes \overline{S^\alpha }} ,$$
and
$$\mathcal{C}\left( {{T^{{ \odot ^2}{V^{1,0}}}}} \right) = \sum\limits_{\alpha  = 1}^N {{S^\alpha }\left( T \right) \otimes \overline {\mathcal{C}} \left( {\overline {{S^\alpha }} } \right)}.$$
Consequently, if $\left\{ {{S^\alpha }} \right\}_{\alpha  = 1}^N$ is an orthonormal eigenbasis of $\mathcal{C}$ with corresponding eigenvalues $\left\{ {{\lambda _\alpha }} \right\}_{\alpha  = 1}^N$, then
$$g\left( {\mathcal{C}\left( {{T^{{ \odot ^2}{V^{1,0}}}}} \right),\overline {{T^{{ \odot ^2}{V^{1,0}}}}} } \right) = \sum\limits_{\alpha  = 1}^N {{\lambda _\alpha }{{\left| {{S^\alpha }T} \right|}^2}}. $$

\subsection{The cone condition and an eigenvalue inequality}
The notion of a cone condition for curvature operators was first introduced by Li\cite{li2025new}. For a curvature operator $\mathcal{R}$ on a vector space $V$, let $${\lambda _1} \le {\lambda _2} \le  \cdots  \le {\lambda _N}$$ be the eigenvalues of $\mathcal{R}$. We say that $\mathcal{R}$ belongs to the cone $C\left( \alpha ,\theta \right)$ if
$${\lambda _1} +  \cdots  + {\lambda _\alpha } \ge  - \alpha \theta \bar \lambda ,$$
where $1 \le \alpha  \le N$ and $\theta  \ge  - 1$. Analogously, we write  $\mathcal{R} \in \mathring{C}\left( \alpha ,\theta \right)$ when the inequality is strict, and $\mathcal{R} \in \partial C\left( \alpha ,\theta \right)$ when the equality holds.

The following inequality is taken from \cite{fu2025new}. For a real number $\alpha $, write ${\alpha _0} = \alpha  - \left[ \alpha  \right]$ and adopt the conventions:
$$\sum\limits_{\beta = 1}^{\alpha}  {{\lambda _\beta }}  = {\lambda _1} +  \cdots  + {\lambda _{\left[ \alpha  \right]}} + {\alpha _0}{\lambda _{\left[ \alpha  \right] + 1}}$$
and
$$\sum\limits_{\beta  = \alpha  + 1}^N {{\lambda _\beta } = \left( {1 - {\alpha _0}} \right)} {\lambda _{\left[ \alpha  \right] + 1}} + {\lambda _{\left[ \alpha  \right] + 2}} \cdots  + {\lambda _N}.$$
\begin{lemma} $(${\cite{fu2025new}}$)$
\label{lemma2.5}
Let ${\lambda _1} \le {\lambda _2} \le  \cdots  \le {\lambda _N}$ be some real numbers with $\sum\limits_{\beta  = 1}^N {{\lambda _\beta }}  = N\bar \lambda $, i.e., $\bar \lambda $ is their average. If the smallest $\alpha$  numbers satisfy
$${\lambda _1} +  \cdots  + {\lambda _\alpha } \ge  - \alpha \theta \bar \lambda ,$$
then for any nonnegative real numbers ${M_1},{M_2}, \cdots ,{M_N}$, denote $S = \sum\limits_{\beta  = 1}^N {{M_\beta }} $ and $M = \max \left\{ {{M_1},{M_2}, \cdots ,{M_N}} \right\}$. Provided that $\alpha  \le \frac{S}{M}$, we have
$$\sum\limits_{\beta = 1}^N {{\lambda _\beta }{M_\beta}}  \ge  - S\theta \bar \lambda .$$
\end{lemma}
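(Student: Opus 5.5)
The plan is to reduce to the extremal weight distribution and then use convexity of partial sums. If $M=0$ then $S=0$ and both sides vanish, so I assume $M>0$. Dividing every $M_\beta$ by $M$ rescales both sides of the desired inequality by $1/M$, so I may assume $M=1$. Then $0\le M_\beta\le 1$, $S=\sum_\beta M_\beta\le N$, and the hypothesis becomes $\alpha\le S$.

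\emph{Step 1 (bathtub principle).} I claim that among all weights $m_\beta\in[0,1]$ with $\sum_\beta m_\beta=S$, the sum $\sum_\beta\lambda_\beta m_\beta$ is minimized by assigning full weight $1$ to the indices $1,\dots,[S]$, weight $S-[S]$ to index $[S]+1$, and $0$ to the rest. With the paper's fractional convention this gives
$$\sum_{\beta=1}^N\lambda_\beta M_\beta\ \ge\ \sum_{\beta=1}^{S}\lambda_\beta .$$
The argument is an exchange argument. Suppose $m_\gamma<1$ while $m_\delta>0$ for some $\delta>\gamma$. Shifting weight $\varepsilon=\min\{1-m_\gamma,m_\delta\}$ from $\delta$ to $\gamma$ keeps the total sum fixed. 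It changes the objective by $\varepsilon(\lambda_\gamma-\lambda_\delta)\le 0$, since the $\lambda_\beta$ are increasing. Finitely many such moves reach the extremal configuration. Equivalently, one can use the Abel summation identity $\sum_\beta\lambda_\beta m_\beta=\lambda_N\sum_\beta m_\beta-\sum_{k<N}(\lambda_{k+1}-\lambda_k)\sum_{\beta\le k}m_\beta$ together with $\sum_{\beta\le k}m_\beta\le\min\{k,S\}$.

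\emph{Step 2 (convexity).} Define $f(t)=\sum_{\beta=1}^{t}\lambda_\beta$ for $t\in[0,N]$, using the fractional convention. Then $f(0)=0$, and $f$ is piecewise linear with slope $\lambda_{k+1}$ on $[k,k+1]$. Since these slopes are nondecreasing, $f$ is convex. A convex function with $f(0)=0$ has $f(t)/t$ nondecreasing for $t>0$. Because $0<\alpha\le S$, this gives
$$\frac{f(S)}{S}\ \ge\ \frac{f(\alpha)}{\alpha}\ \ge\ -\theta\bar\lambda,$$
where the last inequality is the hypothesis $\lambda_1+\cdots+\lambda_\alpha\ge-\alpha\theta\bar\lambda$. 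Therefore $f(S)\ge -S\theta\bar\lambda$, and combining this with Step 1 yields the lemma.

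The only step needing care is Step 1. One must check that the exchange argument, or the Abel summation, is compatible with the fractional-index convention when $S$ is not an integer. Beyond that, the proof is elementary and uses no earlier result of the paper.
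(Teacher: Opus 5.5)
Your proof is correct, but it takes a different route from the paper's.

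\textbf{The paper's route.} The paper never identifies the minimizing weights. It splits $\sum_\beta\lambda_\beta M_\beta$ at the index $\alpha$ (with the fractional convention). It bounds every eigenvalue past $\alpha$ below by the pivot $\lambda_{[\alpha]+1}$, and on the head it uses $M_\beta\le M$ together with $\lambda_\beta-\lambda_{[\alpha]+1}\le 0$. This gives, in one chain,
\[
\sum_{\beta=1}^N\lambda_\beta M_\beta\ \ge\ M\left(\lambda_1+\cdots+\lambda_\alpha\right)+\lambda_{[\alpha]+1}\left(S-\alpha M\right).
\]
It then concludes from $S-\alpha M\ge 0$ and $\lambda_{[\alpha]+1}\ge(\lambda_1+\cdots+\lambda_\alpha)/\alpha\ge-\theta\bar\lambda$.

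In terms of your function $f$, that right-hand side is $M$ times the value at $t=S/M$ of the supporting line $f(\alpha)+\lambda_{[\alpha]+1}(t-\alpha)$ of $f$ at $\alpha$. The final step is the slope comparison $\lambda_{[\alpha]+1}\ge f(\alpha)/\alpha$. So both proofs rest on convexity of the partial-sum function. The paper linearizes at $\alpha$ and bounds the weighted sum against that line directly. You instead first compute the exact minimum $M f(S/M)$ over admissible weights, then compare secant slopes from the origin.

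\textbf{What each buys.} Your bathtub step costs an extra exchange or Abel-summation argument. The Abel version fully settles the fractional-index worry you raise, since the extremal weights have partial sums exactly $\min\{k,S\}$. In return you get the sharp intermediate bound $\sum_\beta\lambda_\beta M_\beta\ge M f(S/M)$, which dominates the paper's intermediate bound by convexity. It also makes transparent that the lemma is just the monotonicity of the average $f(t)/t$ of the smallest $t$ eigenvalues. The paper's route is shorter and self-contained in a single string of inequalities.
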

\begin{proof}
First,
\begin{align*}
\sum\limits_{\beta  = 1}^N {{\lambda _\beta}{M_\beta }}  =& \sum\limits_{\beta = 1}^{\alpha} {{\lambda _\beta }{M_\beta}}  + \sum\limits_{\beta  = \alpha + 1}^N {{\lambda _\beta }{M_\beta }} \\
 \ge & \sum\limits_{\beta = 1}^{\alpha}{{\lambda _\beta }{M_\beta}}  + {\lambda _{\left[ \alpha \right] + 1}}\sum\limits_{\beta  = \alpha + 1}^N {{M_\beta }} \\
 = & \sum\limits_{\beta  = 1}^{\alpha} {\left( {{\lambda _\beta } - {\lambda _{\left[ \alpha \right] + 1}}} \right){M_\beta }}  + {\lambda _{\left[ \alpha \right] + 1}}\sum\limits_{\beta  = 1}^N {{M_\beta }} \\
 \ge & M\sum\limits_{\beta  = 1}^{\alpha} {\left( {{\lambda _\beta } - {\lambda _{\left[ \alpha \right] + 1}}} \right)}  + {\lambda _{\left[ \alpha \right] + 1}}S\\
= & {\lambda _{\left[ \alpha\right] + 1}}\left( {S - \alpha M} \right) + M\sum\limits_{\beta  =   1}^{\alpha} {{\lambda _\beta }} .
\end{align*}
Second, using the ordering ${\lambda _1} \le {\lambda _2} \le  \cdots  \le {\lambda _N}$ together with the hypothesis ${\lambda _1} +  \cdots  + {\lambda _\alpha} \ge  - \alpha\theta \bar \lambda$, we obtain
\begin{equation}
\label{equation2.3}
{\lambda _{\left[ \alpha \right] + 1}} \ge \frac{{{\lambda _1} +  \cdots  + {\lambda _\alpha}}}{\alpha} \ge  - \theta \bar \lambda .
\end{equation}
Consequently, whenever $\alpha \le \frac{S}{M}$,
$$\sum\limits_{\beta  = 1}^N {{\lambda _\beta }{M_\beta }}  \ge  - \theta \bar \lambda \left( {S - \alpha M} \right) - \alpha M\theta \bar \lambda  \ge  - S\theta \bar \lambda. $$
\end{proof}

\section{proof of Theorem~\ref{theorem1.1}}
\begin{proof}[{\bf Proof of Theorem~\ref{theorem1.1}}]
(1) Take the orthonormal basis of ${ \odot ^2}{V^{1,0}}$ given by $${\left\{ {\frac{1}{{\sqrt 2 }}\left( {{Z_A} \odot {Z_B}} \right)} \right\}_{1 \le A < B \le n}} \cup \left\{ {\frac{1}{2}\left( {{Z_A} \odot {Z_A}} \right)} \right\}_{A = 1}^n,$$ then

\begin{align*}
  &\sum\limits_{1 \le A < B \le n}^{} {g\left( {\mathcal{C} \left( {\frac{1}{{\sqrt 2 }}\left( {{Z_A} \odot {Z_B}} \right)} \right),\frac{1}{{\sqrt 2 }}\left( {\overline {{Z_A}}  \odot \overline {{Z_B}} } \right)} \right)}  + \sum\limits_{A = 1}^n {g\left( {\mathcal{C}\left( {\frac{1}{2}\left( {{Z_A} \odot {Z_A}} \right)} \right),\frac{1}{2}\left( {\overline {{Z_A}}  \odot \overline {{Z_A}} } \right)} \right)}  \\
  &= \frac{1}{4}\sum\limits_{A,B = 1}^n {g\left( {\mathcal{C}\left( {{Z_A} \odot {Z_B}} \right),\overline {{Z_A}}  \odot \overline {{Z_B}} } \right)}  =\sum\limits_{A,B = 1}^n {R\left( {{Z_A},\overline {{Z_A}} ,\overline {{Z_B}} ,{Z_B}} \right)}\\
   &=\frac{1}{2}\sum\limits_{a,B = 1}^n {R\left( {{e_a} - \sqrt { - 1} J{e_a},{e_a} + \sqrt { - 1} J{e_a},\overline {{Z_B}} ,{Z_B}} \right)}
   = \sum\limits_{a,B = 1}^n {R\left( {{e_a},\sqrt { - 1} J{e_a},\overline {{Z_B}} ,{Z_B}} \right)} \\
   &= \sum\limits_{a,b = 1}^n {R\left( {{e_a},J{e_a},{e_b},J{e_b}} \right)}
   = \sum\limits_{a = 1}^n {Ric\left( {{e_a},{e_a}} \right)}
   =\frac{s}{2} ,
\end{align*}
because $M$ has $\frac{{n\left( {n + 1} \right)}}{2}$-positive ( nonnegative) Calabi curvature operator,  the scalar curvature is positive(nonnegative).

(2) Let $A \in \left\{ {1,2, \cdots ,n} \right\}$ be fixed. For all $B \ne A$, define
$$2{\mu _b}=g\left( {\mathcal{C}\left( {\frac{1}{{\sqrt 2 }}\left( {{Z_A} \odot {Z_B}} \right)} \right),\frac{1}{{\sqrt 2 }}\left( {\overline {{Z_A}}  \odot \overline {{Z_B}} } \right)} \right) = 2R\left( {{e_a},J{e_a},{e_b},J{e_b}} \right),$$
$${\mu _a}=g\left( {\mathcal{C}\left( {\frac{1}{2}\left( {{Z_A} \odot {Z_A}} \right)} \right),\frac{1}{2}\left( {\overline {{Z_A}}  \odot \overline {{Z_A}} } \right)} \right) = R\left( {{e_a},J{e_a},{e_a},J{e_a}} \right).$$
Arranging these $n$ numbers in ascending order yields
$$2{\mu _1}, \cdots 2{\mu _{k - 1}},{\mu _k} = {\mu _a},2{\mu _{k + 1}}, \cdots, 2{\mu _n}.$$
Because $M$ has $\frac{n+1}{2}$- positive(nonnegative) Calabi curvature operator, we obtain the following estimates.

Case 1. $k\le \frac{n+1}{2}$.  When $n$ is odd,
\begin{align*}
  0 \le & \sum\limits_{i = 1}^{k - 1} {2\mu _i}  + {\mu _k} + \sum\limits_{j = k + 1}^{\frac{{n + 1}}{2}} {2{\mu _j}} \\
   \le & \sum\limits_{i = 1}^{k - 1} {\left( {{\mu _i} + {\mu _{n - i + 1}}} \right)}  + {\mu _k} + \sum\limits_{j = k + 1}^{\frac{{n + 1}}{2}} {\left( {{\mu _j} + {\mu _{n - j + 2}}} \right)} \\
   =& \sum\limits_{l = 1}^n {{\mu _l}}  = \sum\limits_{b = 1}^n {R\left( {{e_a},J{e_a},{e_b},J{e_b}} \right)}
   =Ric\left( {{e_a},{e_a}} \right).
\end{align*}
When $n$ is even,
\begin{align*}
  0 \le & \sum\limits_{i = 1}^{k - 1} {2\mu _i}  + {\mu _k} + \sum\limits_{j = k + 1}^{\frac{n}{2}} {2{\mu _j}}  + {\mu _{\frac{n}{2} + 1}}  \\
   \le & \sum\limits_{i = 1}^{k - 1} {\left( {{\mu _i} + {\mu _{n - i + 1}}} \right)}  + {\mu _k} + \sum\limits_{j = k + 1}^{\frac{n}{2}} {\left( {{\mu _j} + {\mu _{n - j + 2}}} \right)}  + {\mu _{\frac{n}{2} + 1}} \\
   =& \sum\limits_{l = 1}^n {{\mu _l}}  = \sum\limits_{b = 1}^n {R\left( {{e_a},J{e_a},{e_b},J{e_b}} \right)}
   = Ric\left( {{e_a},{e_a}} \right) .
\end{align*}
Case 2.  $k > \frac{n+1}{2}$. When $n$ is odd,
\begin{align*}
0 \le & \sum\limits_{i = 1}^{\frac{{n + 1}}{2}} {2{\mu _i}}
 \le  \sum\limits_{i = 1}^{n - k} {\left( {{\mu _i} + {\mu _{n - i + 1}}} \right)}  + \left( {{\mu _{n - k + 1}} + {\mu _{n - k + 2}} + {\mu _k}} \right) + \sum\limits_{j = n - k + 3}^{\frac{{n + 1}}{2}} {\left( {{\mu _j} + {\mu _{n - j + 2}}} \right)} \\
 = & \sum\limits_{l = 1}^n {{\mu _l}}  = \sum\limits_{b = 1}^n {R\left( {{e_a},J{e_a},{e_b},J{e_b}} \right)}
 =  Ric\left( {{e_a},{e_a}} \right).
\end{align*}
When $n$ is even and $k \ne \frac{n}{2}+1$,
\begin{align*}
0 \le & \sum\limits_{i = 1}^{\frac{n}{2}} {2{\mu _i}}  + {\mu _{\frac{n}{2} + 1}}\\
 \le & \sum\limits_{i = 1}^{n - k} {\left( {{\mu _i} + {\mu _{n - i + 1}}} \right)}  + \left( {{\mu _{n - k + 1}} + {\mu _{n - k + 2}} + {\mu _k}} \right) + \sum\limits_{j = n - k + 3}^{\frac{n}{2}} {\left( {{\mu _j} + {\mu _{n - j + 2}}} \right)}  + {\mu _{\frac{n}{2} + 1}}\\
 = & \sum\limits_{l = 1}^n {{\mu _l}}  = \sum\limits_{b = 1}^n {R\left( {{e_a},J{e_a},{e_b},J{e_b}} \right)}
 =  Ric\left( {{e_a},{e_a}} \right).
\end{align*}
When $n$ is even and $k = \frac{n}{2}+1$,
\begin{align*}
0 \le & \sum\limits_{i = 1}^{\frac{n}{2}} {2{\mu _i}}  + \frac{1}{2}{\mu _{\frac{n}{2} + 1}}\\
 \le & \sum\limits_{i = 1}^{\frac{n}{2} - 1} {\left( {{\mu _i} + {\mu _{n - i + 1}}} \right)}  + \left( {{\mu _{\frac{n}{2}}} + \frac{1}{2}{\mu _{\frac{n}{2} + 1}}} \right) + \frac{1}{2}{\mu _{\frac{n}{2} + 1}}\\
 = & \sum\limits_{l = 1}^n {{\mu _l}}  = \sum\limits_{b = 1}^n {R\left( {{e_a},J{e_a},{e_b},J{e_b}} \right)}
 =  Ric\left( {{e_a},{e_a}} \right).
\end{align*}
Combining all cases and using   $Ric\left( {J{e_a},J{e_a}} \right) = Ric\left( {{e_a},{e_a}} \right)$, we can conclude that $Ric \ge 0$.

(3) Since $M$ has $\left( n-1 \right)$-positive(nonnegative) Calabi curvature operator, we obtain
\begin{align*}
0 \le & \sum\limits_{b \ne a}^{} {2{\mu _b}}
 = \sum\limits_{b = 1}^n {2{\mu _b}}  - 2{\mu _a}\\
 =& 2\sum\limits_{b = 1}^n {R\left( {{e_a},J{e_a},{e_b},J{e_b}} \right)}  - 2R\left( {{e_a},J{e_a},{e_a},J{e_a}} \right)\\
 =& 2\left( {Ric\left( {{e_a},{e_a}} \right) - R\left( {{e_a},J{e_a},{e_a},J{e_a}} \right)} \right)
 = 2Ri{c^ \bot }\left( {{e_a},{e_a}} \right) .
\end{align*}

(4) Since $M$ has $ n $-positive(nonnegative) Calabi curvature operator, we obtain
\begin{align*}
0 \le & \sum\limits_{b \ne a}^{} {2{\mu _b}}  + {\mu _a}
 =  \sum\limits_{b = 1}^n {2{\mu _b}}  - {\mu _a}\\
 = & 2\sum\limits_{b = 1}^n {R\left( {{e_a},J{e_a},{e_b},J{e_b}} \right)}  - R\left( {{e_a},J{e_a},{e_a},J{e_a}} \right)\\
 = & 2Ric\left( {{e_a},{e_a}} \right) - R\left( {{e_a},J{e_a},{e_a},J{e_a}} \right).
\end{align*}

(5) Suppose $M$ is reducible, and write $M = M_1 \times M_2$, where $M_i$ is a complex $n_i$-dimensional K{\"a}hler manifold, $i=1,2$. Choose an orthonormal basis $$\left\{ {{e_1}, \cdots ,{e_{{n_1}}}} , {{Je_1}, \cdots ,{Je_{{n_1}}}} \right\}$$ for $M_1$ and $$\left\{ {{e_{{n_1} + 1}}, \cdots ,{e_{{n_1} + {n_2}}}},{{Je_{{n_1} + 1}}, \cdots ,{Je_{{n_1} + {n_2}}}} \right\}$$ for $ M_2$. For indices $1 \le A \le {n_1}$ and ${n_1} + 1 \le C \le {n_1} + {n_2}$, the curvature tensor of a product satisfies
$$ R\left( {{Z_A},\overline {{Z_A}} ,\overline {{Z_C}} ,{Z_C}} \right) = 0, $$
hence
$$g\left( {\mathcal{C}\left( {\frac{1}{{\sqrt 2 }}{Z_A} \odot {Z_C}} \right),\frac{1}{{\sqrt 2 }}\overline {{Z_A}}  \odot \overline {{Z_C}} } \right) =0.$$
Consequently,
$$\sum\limits_{\alpha  = 1}^{{n_1}  {n_2}} {{\lambda _\alpha }}  \le \sum\limits_{A = 1}^{{n_1}} {\sum\limits_{C = {n_1} + 1}^{{n_1} + {n_2}} {g\left( {\mathcal{C}\left( {\frac{1}{{\sqrt 2 }}{Z_A} \odot {Z_C}} \right),\frac{1}{{\sqrt 2 }}\overline {{Z_A}}  \odot \overline {{Z_C}} } \right)} }  = 0.$$
Thus the Calabi operator  $\mathcal{C}$ is $ {n_1}{n_2} $-nonpositive . This contradicts the hypothesis that $${\lambda _1} + {\lambda _2} +  \cdots  + {\lambda _\alpha} > 0\ \ \  \text{for}\ \   \alpha \le n-1.$$ Therefore $M$ must be irreducible.
\end{proof}


\section{Bochner formula for K{\"a}hler algebraic curvature tensors}
In this section, we adapt the method of \cite{dai2024einstein} to establish a Bochner-type formula for the Calabi operator $\mathcal{C}$ associated with an arbitrary K{\"a}hler algebraic curvature tensor on a given complex $n$-dimensional K{\"a}hler manifold $\left( {M,g,J} \right)$.

In \cite{2021New}, Petersen and Wink gave a new method to study curvature operators. Using this method, we can arrive at the following proposition.

\begin{proposition}
\label{pro4.1}
Let $T$ be a K{\"a}hler algebraic curvature tensor on a  K{\"a}hler manifold $\left( {M,g,J} \right)$ of complex dimension $n$. Then
\begin{equation*}
\begin{aligned}
g\left( {{\mathcal{C} }\left( {{T^{{ \odot ^2}{V^{1,0}}}}} \right),\overline {{T^{{ \odot ^2}{V^{1,0}}}}} } \right)  =& 4\sum\limits_{A,B,C,D=1}^{n} {\sum\limits_{i,j=1}^{2n} {{R_{\overline A C  D \overline B}} {{T_{A\overline C ij}}{T_{B\overline D ij}}}} }\\
 &+ 4\sum\limits_{B,C,D=1}^{n} {\sum\limits_{j,k,l=1}^{2n} { {  {R_{\overline D C  D \overline B}}{T_{Bijk}}{T_{\overline C ijk}}} } }\\
 =&   - \sum\limits_{i,j,s,t,p,q = 1}^{2n} {{R_{sptq}}{T_{spij}}{T_{tqij}}}  + 2\sum\limits_{i,j,k,s,t = 1}^{2n} {{R_{ts}}{T_{tijk}}{T_{sijk}}} .
\end{aligned}
\end{equation*}
\end{proposition}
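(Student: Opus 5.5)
We expand the left-hand side directly from the definitions and then convert to a real frame. Since the quantity $g(\mathcal{C}(T^{\odot^2 V^{1,0}}),\overline{T^{\odot^2 V^{1,0}}})$ does not depend on the choice of orthonormal basis of $\odot^2V^{1,0}$, we use the basis $\{\frac{1}{\sqrt2}Z_A\odot Z_B\}_{A<B}\cup\{\frac12 Z_A\odot Z_A\}$ from the proof of Theorem~\ref{theorem1.1}. As in part (1) of that proof, the sum over this basis can be rewritten as $\frac14\sum_{A,B}$ over all pairs. This gives
\[
g\big(\mathcal{C}(T^{\odot^2V^{1,0}}),\overline{T^{\odot^2V^{1,0}}}\big)=\frac14\sum_{A,B,C,D}R_{\overline A C D\overline B}\, g\big((Z_A\odot Z_B)T,\overline{(Z_C\odot Z_D)T}\big),
\]
with the correct index placement coming from the definition of $\overline{\mathcal C}$. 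Remark~\ref{remark2.2} identifies $\overline{(Z_C\odot Z_D)T}$ with $(\overline{Z_C}\odot\overline{Z_D})T$.

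Next we expand $(Z_A\odot Z_B)T$ as the derivation action of $S=Z_A\odot Z_B$ on the four slots of $T$. Since $(Z_A\odot Z_B)(w)=g(Z_A,w)Z_B+g(Z_B,w)Z_A$, and $g(Z_A,\cdot)$ pairs only with $\overline{Z_A}$, each slot contributes terms of the form $T(\dots,Z_B,\dots)$ with the slot contracted against $\overline{Z_A}$. We pair the four slot-terms of $(Z_A\odot Z_B)T$ with the four of $(\overline{Z_C}\odot\overline{Z_D})T$ and contract using Remark~\ref{remark2.1}. Two kinds of terms appear:
\begin{itemize}
\item \emph{Cross terms}, where the derivations act on different slots. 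Using the Kähler symmetries $T(Z,Z,\cdot,\cdot)=0$ and the pair symmetry of $T$ and $R$, these collapse to $4\sum R_{\overline A C D\overline B}T_{A\overline C ij}T_{B\overline D ij}$.
\item \emph{Diagonal terms}, where both derivations act on the same slot. Here one index is contracted, producing $\sum_{D} R_{\overline D C D\overline B}$, which is a Ricci-type trace. This gives $4\sum R_{\overline D C D\overline B}T_{Bijk}T_{\overline C ijk}$.
\end{itemize}
The factor $4$ comes from the four slots together with the two symmetrizations, against the prefactor $\frac14$. This establishes the first equality.

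For the second equality we convert to a real frame. By the definition of the Ricci tensor given earlier, $\sum_D R_{\overline D C D\overline B}$ is, up to sign, the Hermitian Ricci form $R_{C\overline B}$. Then $\sum_{B,C}R_{C\overline B}T_{B\cdots}T_{\overline C\cdots}$ becomes $\frac12\sum_{s,t}R_{ts}T_{t\cdots}T_{s\cdots}$, using Remark~\ref{remark2.1} together with $J$-invariance of $Ric$ and of the $J$-compatible components of $T$. With the remaining constants this yields $2\sum R_{ts}T_{tijk}T_{sijk}$.

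For the first term, we substitute $Z_A=\frac1{\sqrt2}(e_a-\sqrt{-1}Je_a)$ in $R_{\overline AC D\overline B}$ and in $T_{A\overline C ij}$. Since $R$ and $T$ both vanish on $(Z,Z)$ pairs, the sum $\sum_{A,C}T_{A\overline C ij}(\cdot)_{\overline A C}$ equals $\frac12\sum_{s,p}T_{spij}(\cdot)_{sp}$ restricted to $\mathfrak u(n)$-type bivectors. Because $T_{\cdot\cdot ij}$ already lies in $\mathfrak u(n)$, this restriction is harmless. The term thus becomes $-\sum R_{sptq}T_{spij}T_{tqij}$. The sign comes from reordering $R_{\overline A C D\overline B}$ into $R(\overline A, C,\overline B, D)$ via antisymmetry in the last pair.

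The main obstacle is the bookkeeping of signs and factors of $2$ in the slot expansion and in the complex-to-real conversion. In particular, one must check that the cross terms do not produce additional terms of the form $R\,T\,T$ with mixed types, which vanish only because of the Kähler type conditions on $T$. To pin the constants down, I would first verify both sides on the test case $T=R_{\mathbb{CP}^n}$, a constant holomorphic curvature tensor, and then carry out the general computation.
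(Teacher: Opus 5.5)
Your proposal is correct and follows essentially the same route as the paper: the same basis of $\odot^2V^{1,0}$ reduces the left-hand side to $\frac14\sum_{A,B,C,D}R_{\overline A CD\overline B}\,g\big((Z_A\odot Z_B)T,(\overline{Z_C}\odot\overline{Z_D})T\big)$, and a slot-by-slot expansion via Remarks~\ref{remark2.1} and \ref{remark2.2} splits it into three kinds of terms: same-slot terms give the Ricci-type trace, same-pair cross terms give the $T_{A\overline C ij}T_{B\overline D ij}$ term, and cross terms between different pairs vanish by the type condition (the paper checks this via $J$-invariance, you via $T(Z,Z,\cdot,\cdot)=0$), after which both arguments convert to a real frame. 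Your conversion through the identity $\sum_{A,C}\omega_{\overline A C}\eta_{A\overline C}=\frac12\sum_{s,p}\omega_{sp}\eta_{sp}$ for $(1,1)$-forms is a slightly tidier version of the paper's explicit $e_c,Je_c$ expansion, and the sign you hedge on is in fact $+$, since $\sum_D R_{\overline D C D\overline B}=R_{C\overline B}$ exactly.
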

\begin{proof}
Take the orthonormal basis of ${{ \odot ^2}{V^{1,0}}}$
$$\frac{1}{{\sqrt 2 }}{\left\{ {{Z_A} \odot {Z_B}} \right\}_{A < B}} \cup \frac{1}{2}\left\{ {{Z_A} \odot {Z_A}} \right\}_{A=1}^n.$$
 Using the Remark \ref{remark2.1} and \ref{remark2.2}, we compute

\begin{align*}
&g\left( {\left( {{Z_A} \odot {Z_B}} \right)T,\overline {\left( {{Z_C} \odot {Z_D}} \right)T} } \right)
 = g\left( {\left( {{Z_A} \odot {Z_B}} \right)T,\left( {\overline {{Z_C}}  \odot \overline {{Z_D}} } \right)T} \right)\\
   =& \sum\limits_{i,j,k,l = 1}^{2n} {\left( {T\left( {{Z_A} \odot {Z_B}\left( {{e_i}} \right),{e_j},{e_k},{e_l}} \right) +  \cdots  + T\left( {{e_i},{e_j},{e_k},{Z_A} \odot {Z_B}\left( {{e_l}} \right)} \right)} \right)}  \\
  &\left( {T\left( {\overline {{Z_C}}  \odot \overline {{Z_D}} \left( {{e_i}} \right),{e_j},{e_k},{e_l}} \right) +  \cdots  + T\left( {{e_i},{e_j},{e_k},\overline {{Z_C}}  \odot \overline {{Z_D}} \left( {{e_l}} \right)} \right)} \right) \hfill \\
   =& 4\sum\limits_{E = 1}^n {\sum\limits_{j,k,l = 1}^{2n} {T\left( {{Z_A} \odot {Z_B}\left( {\overline {{Z_E}} } \right),{e_j},{e_k},{e_l}} \right)} }   \\
  &\left( {T\left( {\overline {{Z_C}}  \odot \overline {{Z_D}} \left( {{Z_E}} \right),{e_j},{e_k},{e_l}} \right) + T\left( {{Z_E},\overline {{Z_C}}  \odot \overline {{Z_D}} \left( {{e_j}} \right),{e_k},{e_l}} \right) + 2T\left( {{Z_E},{e_j},\overline {{Z_C}}  \odot \overline {{Z_D}} \left( {{e_k}} \right),{e_l}} \right)} \right) \\
 =& 4\sum\limits_{E,F=1}^n {\sum\limits_{i,j = 1}^{2n} {\left( {T\left( {{Z_A} \odot {Z_B}\left( {\overline {{Z_E}} } \right),\overline {{Z_F}} ,{e_i},{e_j}} \right)T\left( {{Z_E},\overline {{Z_C}}  \odot \overline {{Z_D}} \left( {{Z_F}} \right),{e_i},{e_j}} \right)} \right.} } \\
& \left. { + 2T\left( {{Z_A} \odot {Z_B}\left( {\overline {{Z_E}} } \right),{e_i},\overline {{Z_F}} ,{e_j}} \right)T\left( {{Z_E},{e_i},\overline {{Z_C}}  \odot \overline {{Z_D}} \left( {{Z_F}} \right),{e_j}} \right)} \right)\\
 &+ 4\sum\limits_{E=1}^n {\sum\limits_{i,j,k = 1}^{2n} {T\left( {{Z_A} \odot {Z_B}\left( {\overline {{Z_E}} } \right),{e_i},{e_j},{e_k}} \right)T\left( {\overline {{Z_C}}  \odot \overline {{Z_D}} \left( {{Z_E}} \right),{e_i},{e_j},{e_k}} \right)} } \\
 =& 4\sum\limits_{E,F=1}^n {\sum\limits_{i,j = 1}^{2n} {\left( {\left( {{\delta _{AE}}{T_{B\overline F ij}} + {\delta _{BE}}{T_{A\overline F ij}}} \right)\left( {{\delta _{CF}}{T_{E\overline D ij}} + {\delta _{DF}}{T_{E\overline C ij}}} \right)} \right.} } \\
&\left. { + 2\left( {{\delta _{AE}}{T_{Bi\overline F j}} + {\delta _{BE}}{T_{Ai\overline F j}}} \right)\left( {{\delta _{CF}}{T_{Ei\overline D j}} + {\delta _{DF}}{T_{Ei\overline C j}}} \right)} \right)\\
 &+ 4\sum\limits_{E=1}^n {\sum\limits_{i,j,k = 1}^{2n} {\left( {{\delta _{AE}}{T_{Bijk}} + {\delta _{BE}}{T_{Aijk}}} \right)} } \left( {{\delta _{CE}}{T_{\overline D ijk}} + {\delta _{DE}}{T_{\overline C ijk}}} \right)\\
 =& 8\sum\limits_{i,j = 1}^{2n} \left( {\left( {{T_{B\overline C ij}}{T_{A\overline D ij}} + {T_{B\overline D ij}}{T_{A\overline C ij}}} \right) + 2\left( {{T_{Bi\overline C j}}{T_{Ai\overline D j}} + {T_{Bi\overline D j}}{T_{Ai\overline C j}}} \right)} \right) \\
 &+ 4\sum\limits_{i,j,k = 1}^{2n} {\left( {{\delta _{AC}}{T_{Bijk}}{T_{\overline D ijk}} + {\delta _{AD}}{T_{Bijk}}{T_{\overline C ijk}} + {\delta _{BC}}{T_{Aijk}}{T_{\overline D ijk}} + {\delta _{BD}}{T_{Aijk}}{T_{\overline C ijk}}} \right)}.
\end{align*}
Because $T$ is a K{\"a}hler algebraic curvature tensor,
\begin{align*}
\sum\limits_{i,j = 1}^{2n} {{T_{Bi\overline C j}}{T_{Ai\overline D j}}}  =& \sum\limits_{i,j = 1}^{2n} {T\left( {{Z_B},{e_i},\overline {{Z_C}} ,{e_j}} \right)T\left( {{Z_A},{e_i},\overline {{Z_D}} ,{e_j}} \right)} \\
=& \sum\limits_{i,j = 1}^{2n} {T\left( {{Z_B},{e_i},\overline {{Z_C}} ,J{e_j}} \right)T\left( {{Z_A},{e_i},\overline {{Z_D}} ,J{e_j}} \right)}  \hfill \\
=& \sum\limits_{i,j = 1}^{2n} {T\left( {{Z_B},{e_i},J\overline {{Z_C}} ,{J^2}{e_j}} \right)T\left( {{Z_A},{e_i},J\overline {{Z_D}} ,{J^2}{e_j}} \right)}  \hfill \\
=&  - \sum\limits_{i,j = 1}^{2n} {T\left( {{Z_B},{e_i},\overline {{Z_C}} ,{e_j}} \right)T\left( {{Z_A},{e_i},\overline {{Z_D}} ,{e_j}} \right)}  \hfill \\
=& 0.
\end{align*}
Hence
\begin{align*}
 &g\left( {\left( {{Z_A} \odot {Z_B}} \right)T,\overline {\left( {{Z_C} \odot {Z_D}} \right)T} } \right)
 = 8\sum\limits_{i,j = 1}^{2n} \left( {{T_{B\overline C ij}}{T_{A\overline D ij}} + {T_{B\overline D ij}}{T_{A\overline C ij}}} \right)   \\
 &+ 4\sum\limits_{i,j,k = 1}^{2n} {\left( {{\delta _{AC}}{T_{Bijk}}{T_{\overline D ijk}} + {\delta _{AD}}{T_{Bijk}}{T_{\overline C ijk}} + {\delta _{BC}}{T_{Aijk}}{T_{\overline D ijk}} + {\delta _{BD}}{T_{Aijk}}{T_{\overline C ijk}}} \right)} .
\end{align*}
Summing the above terms gives
\begin{align*}
&g\left( {{\mathcal{C} }\left( {{T^{{ \odot ^2}{V^{1,0}}}}} \right),\overline {{T^{{ \odot ^2}{V^{1,0}}}}} } \right)
= \frac{1}{16}\sum\limits_{A,B,C,D=1}^{n} {g\left( {\left( {{Z_A} \odot {Z_B}} \right)T,\left( {\overline {{Z_C}}  \odot \overline {{Z_D}} } \right)T} \right) {g\left( {\overline {\mathcal{C}} \left( {\overline {{Z_A}}  \odot \overline {{Z_B}} } \right),{Z_C} \odot {Z_D}} \right)}} \\
 =& 2\sum\limits_{A,B,C,D=1}^{n} {\sum\limits_{i,j=1}^{2n} {{R_{\overline A C  D \overline B}}\left(  {{T_{B\overline C ij}}{T_{A\overline D ij}} + {T_{B\overline D ij}}{T_{A\overline C ij}}  } \right)} }  \\
 &+\sum\limits_{A,B,C,D=1}^{n} {\sum\limits_{i,j,k=1}^{2n} {{R_{\overline A C  D \overline B}}\left( {{\delta _{AC}}{T_{Bijk}}{T_{\overline D ijk}} + {\delta _{AD}}{T_{Bijk}}{T_{\overline C ijk}} + {\delta _{BC}}{T_{Aijk}}{T_{\overline D ijk}} + {\delta _{BD}}{T_{Aijk}}{T_{\overline C ijk}}} \right)} } \\
 =& 4\sum\limits_{A,B,C,D=1}^{n} {\sum\limits_{i,j=1}^{2n} {{R_{\overline A C  D \overline B}} {{T_{A\overline C ij}}{T_{B\overline D ij}} } } } + 4\sum\limits_{B,C,D=1}^{n} {\sum\limits_{i,j,k=1}^{2n} { {  {R_{\overline D C  D \overline B}}{T_{Bijk}}{T_{\overline C ijk}}} } } .
\end{align*}
We now evaluate the two terms separately. For the first term, using Remark \ref{remark2.1},
\begin{align*}
 & \sum\limits_{A,B,C,D = 1}^n {\sum\limits_{i,j = 1}^{2n} {{R_{\bar ACD\bar B}}{T_{A\bar Cij}}{T_{B\bar Dij}}} }  = \sum\limits_{C,D = 1}^n {\sum\limits_{i,j,s,t = 1}^{2n} {{R_{sCDt}}{T_{s\bar Cij}}{T_{t\bar Dij}}} }\\
   =& \frac{1}{2} \sum\limits_{c,D = 1}^n {\sum\limits_{i,j,s,t = 1}^{2n} {\left( {{R_{scDt}} - \sqrt { - 1} {R_{sc'Dt}}} \right)\left( {{T_{scij}} + \sqrt { - 1} {T_{sc'ij}}} \right){T_{t\bar Dij}}} } \\
       =& \frac{1}{2} \sum\limits_{c,D = 1}^n {\sum\limits_{i,j,s,t = 1}^{2n} {\left( {{R_{scDt}}{T_{scij}} + {R_{sc'Dt}}{T_{sc'ij}} + \sqrt { - 1} {R_{scDt}}{T_{sc'ij}} - \sqrt { - 1} {R_{sc'Dt}}{T_{scij}}} \right){T_{t\bar Dij}}} } \\
    =& \frac{1}{2} \sum\limits_{D = 1}^n {\sum\limits_{i,j,s,t = 1}^{2n} {\left( {\sum\limits_{p = 1}^n {{R_{spDt}}\left( {{T_{spij}} + \sqrt { - 1} {T_{sp'ij}}} \right)}  + \sum\limits_{p = n + 1}^{2n} {{R_{spDt}}\left( {{T_{spij}} + \sqrt { - 1} {T_{sp'ij}}} \right)} } \right){T_{t\bar Dij}}} } \\
   =& \frac{1}{2} \sum\limits_{D = 1}^n {\sum\limits_{i,j,s,t,p = 1}^{2n} {{R_{spDt}}\left( {{T_{spij}} + \sqrt { - 1} {T_{sp'ij}}} \right){T_{t\bar Dij}}} }   \\
   =& \frac{1}{4} \sum\limits_{d = 1}^n {\sum\limits_{i,j,s,t,p = 1}^{2n} {\left( {{T_{spij}} + \sqrt { - 1} {T_{sp'ij}}} \right)\left( {{R_{spdt}} - \sqrt { - 1} {R_{spd't}}} \right)\left( {{T_{tdij}} + \sqrt { - 1} {T_{td'ij}}} \right)} }   \\
   =& \frac{1}{4} \sum\limits_{d = 1}^n {\sum\limits_{i,j,s,t,p = 1}^{2n} {\left( {{T_{spij}} + \sqrt { - 1} {T_{sp'ij}}} \right)\left( {{R_{spdt}}{T_{tdij}} + {R_{spd't}}{T_{td'ij}} + \sqrt { - 1} {R_{spdt}}{T_{td'ij}} - \sqrt { - 1} {R_{spd't}}{T_{tdij}}} \right)} }  \\
   =& \frac{1}{4} \sum\limits_{i,j,s,t,p,q = 1}^{2n} {\left( {{T_{spij}} + \sqrt { - 1} {T_{sp'ij}}} \right)\left( {{R_{spqt}}{T_{tqij}} + \sqrt { - 1} {R_{spqt}}{T_{tq'ij}}} \right)}
   \end{align*}
   \begin{align*}
   =& \frac{1}{4} \sum\limits_{i,j,s,t,p,q = 1}^{2n} {{R_{spqt}}\left( {{T_{spij}} + \sqrt { - 1} {T_{sp'ij}}} \right)\left( {{T_{tqij}} + \sqrt { - 1} {T_{tq'ij}}} \right)}  \\
   =&  - \frac{1}{4}\sum\limits_{i,j,s,t,p,q = 1}^{2n} {{R_{sptq}}{T_{spij}}{T_{tqij}}},
\end{align*}
where we used
$$\sum\limits_{s,p = 1}^{2n} {{R_{spqt}}{T_{sp'ij}}}= -\sum\limits_{s,p = 1}^{2n} {{R_{spqt}}{T_{s'pij}}} = -\sum\limits_{s,p = 1}^{2n} {{R_{psqt}}{T_{ps'ij}}} = 0.$$
For the second term,
\begin{align*}
  &\sum\limits_{B,C,D = 1}^n {\sum\limits_{i,j,k = 1}^{2n} {{R_{\bar DCD\bar B}}{T_{Bijk}}{T_{\bar Cijk}}} }
   = \sum\limits_{B,C = 1}^n {\sum\limits_{i,j,k = 1}^{2n} {\left( {\sum\limits_{m = 1}^{2n} {{R_{mCm\bar B}}}  - \sum\limits_{D = 1}^n {{R_{DC\bar D\bar B}}} } \right){T_{Bijk}}{T_{\bar Cijk}}} }  \\
   =& \sum\limits_{B,C = 1}^n {\sum\limits_{i,j,k = 1}^{2n} {{R_{C\bar B}}{T_{Bijk}}{T_{\bar Cijk}}} }
   = \sum\limits_{C = 1}^n {\sum\limits_{i,j,k = 1}^{2n} {\left( {\sum\limits_{s = 1}^{2n} {{R_{Cs}}{T_{sijk}}}  - \sum\limits_{B = 1}^n {{R_{CB}}{T_{\overline B ijk}}} } \right){T_{\overline C ijk}}} }  \\
   =& \sum\limits_{C = 1}^n {\sum\limits_{i,j,k,s = 1}^{2n} {{R_{Cs}}{T_{sijk}}{T_{\overline C ijk}}} }
   = \frac{1}{2}\sum\limits_{c = 1}^n {\sum\limits_{i,j,k,s = 1}^{2n} {\left( {{R_{cs}} - \sqrt { - 1} {R_{c's}}} \right){T_{sijk}}\left( {{T_{cijk}} + \sqrt { - 1} {T_{c'ijk}}} \right)} }   \\
   =& \frac{1}{2}\sum\limits_{c = 1}^n {\sum\limits_{i,j,k,s = 1}^{2n} {\left( {{R_{cs}}{T_{cijk}} + {R_{c's}}{T_{c'ijk}} - \sqrt { - 1} {R_{c's}}{T_{cijk}} + \sqrt { - 1} {R_{cs}}{T_{c'ijk}}} \right){T_{sijk}}} }  \\
   =& \frac{1}{2}\sum\limits_{i,j,k,s,t = 1}^{2n} {\left( {{R_{ts}}{T_{tijk}} + \sqrt { - 1} {R_{ts}}{T_{t'ijk}}} \right){T_{sijk}}},
   = \frac{1}{2}\sum\limits_{i,j,k,s,t = 1}^{2n} { {{R_{ts}}{T_{tijk}} } {T_{sijk}}}
\end{align*}
since
$$\sum\limits_{i = 1}^{2n} {{R_{ts}}{T_{t'ijk}}{T_{sijk}}}  = \sum\limits_{i = 1}^{2n} {{R_{ts}}{T_{t'i'jk}}{T_{si'jk}}}  =  - \sum\limits_{i = 1}^{2n} {{R_{ts}}{T_{tijk}}{T_{s'ijk}}}  = 0.$$
Combining the two results, we obtain
\begin{align*}
&g\left( {{\mathcal{C} }\left( {{T^{{ \odot ^2}{V^{1,0}}}}} \right),\overline {{T^{{ \odot ^2}{V^{1,0}}}}} } \right)  \\
 =&   - \sum\limits_{i,j,s,t,p,q = 1}^{2n} {{R_{sptq}}{T_{spij}}{T_{tqij}}}  + 2\sum\limits_{i,j,k,s,t = 1}^{2n} {{R_{ts}}{T_{tijk}}{T_{sijk}}}  .
\end{align*}
\end{proof}

Next, we will derive formulas for the trace of the Calabi operator and its powers, which will relate certain terms to the eigenvalues of the Calabi operator.

\begin{proposition}
\label{pro4.2}
Let ${\mathcal{C} }$ be the Calabi curvature operator of a complex $n$-dimensional K$\ddot a$hler manifold $\left(M,g,J\right)$. Then
\begin{align*}
tr\mathcal{C}& = \frac{s}{2}, \ \ \ tr\left( {{\mathcal{C}^2}} \right) = \frac{1}{4}{\left| R \right|^2},\\
 tr{\mathcal{C}^3}& =  \sum\limits_{i,j,k,l,s,t = 1}^{2n} { - \frac{1}{2}{R_{ikjl}}{R_{kslt}}{R_{isjt}} + \frac{1}{8}{R_{ijkl}}{R_{klst}}{R_{ijst}}}. \end{align*}
\end{proposition}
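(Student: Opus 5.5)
We compute the three traces directly in the orthonormal basis $\{\frac{1}{\sqrt2}Z_A\odot Z_B\}_{A<B}\cup\{\frac12 Z_A\odot Z_A\}_{A}$ of $\odot^2V^{1,0}$ used in the proof of Theorem~\ref{theorem1.1}. Because the summand is symmetric in $(A,B)$, we can replace a sum over this basis by $\frac14\sum_{A,B}$ over all ordered pairs. The plan is to express each trace first in the complex components $R_{A\bar C\bar D B}$, and then to convert to the real frame using Remark~\ref{remark2.1} together with the $J$-invariance identities already used in the proof of Proposition~\ref{pro4.1}.

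For the trace itself, the computation is exactly the one carried out in part (1) of the proof of Theorem~\ref{theorem1.1}. It gives $tr\,\mathcal{C}=\sum_{A,B}R_{A\bar A\bar B B}=\sum_a Ric(e_a,e_a)=\frac{s}{2}$.

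For $tr(\mathcal{C}^2)$, we first note that the matrix of $\mathcal{C}$ with respect to the symmetric tensors $Z_A\odot Z_B$ is $\mathcal{C}_{(AB),(CD)}=R_{A\bar C\bar D B}$, which is symmetric in $(C,D)$ and in $(A,B)$. Accounting for the normalization, this yields
\[
tr(\mathcal{C}^2)=\sum_{A,B,C,D}R_{A\bar C\bar DB}R_{C\bar A\bar BD}=\sum_{A,B,C,D}|R_{A\bar C\bar DB}|^2 .
\]
We then expand $|R|^2=\sum_{ijkl}R_{ijkl}^2$ in the basis $Z,\bar Z$. The only nonzero components are of type $R_{A\bar B C\bar D}$ and the index permutations of that type, and there are four positions for the placement pattern $(A\bar B C\bar D)$, $(A\bar B\bar C D)$, $(\bar A B C\bar D)$, $(\bar A B\bar C D)$, each contributing the same sum. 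Hence $|R|^2=4\sum|R_{A\bar B C\bar D}|^2$, which gives the factor $\frac14$. The main thing to check here is that the factor $4$ in the normalization of the $\odot$-basis exactly compensates the factor coming from the symmetric-tensor coefficient, in the same way it did for the trace.

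For $tr\,\mathcal{C}^3$, the same bookkeeping gives
\[
tr\,\mathcal{C}^3=\sum R_{A\bar C\bar DB}\,R_{C\bar E\bar FD}\,R_{E\bar A\bar BF},
\]
with conjugate-index placements handled as above. This is the step I expect to be the main obstacle, since the result must be rewritten as a combination of the two real cubic invariants $R_{ikjl}R_{kslt}R_{isjt}$ and $R_{ijkl}R_{klst}R_{ijst}$. My plan is to compute both real invariants in the complex frame, using Remark~\ref{remark2.1} to replace each real contraction by sums over $Z_A$ and $\bar Z_A$ and discarding the vanishing components $R(Z,Z,\cdot,\cdot)=0$.

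For $\sum R_{ijkl}R_{klst}R_{ijst}$, the only surviving contractions pair a $Z$-slot with a $\bar Z$-slot. After the reduction it becomes a constant multiple of $\sum R_{A\bar B C\bar D}R_{D\bar C E\bar F}R_{F\bar E B\bar A}$, which is the cube of the Kähler curvature operator on $\mathfrak{u}(n)$. For $\sum R_{ikjl}R_{kslt}R_{isjt}$, there are two kinds of surviving terms. One kind reproduces the $\mathcal{C}^3$ contraction, in which the indices $i,j$ run over the same type, and the other reproduces the $\mathfrak{u}(n)$ cube. We then solve the resulting $2\times2$ linear relation for $tr\,\mathcal{C}^3$. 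The expected coefficients are $-\frac12$ and $\frac18$, and these can be sanity-checked on $\mathbb{CP}^n$, where $\mathcal{C}$ is a multiple of the identity, or alternatively on a product of curves.

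Throughout, the repeated tool is the same as in Proposition~\ref{pro4.1}. We write $Z_A=\frac1{\sqrt2}(e_a-\sqrt{-1}Je_a)$, expand, and use $R(J\cdot,J\cdot,\cdot,\cdot)=R$ to cancel the imaginary cross terms. The remaining real terms are then recombined into sums over all $2n$ real indices.
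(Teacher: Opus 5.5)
Your proposal is correct. For $\mathrm{tr}\,\mathcal{C}^2$ and $\mathrm{tr}\,\mathcal{C}^3$ it runs in the opposite direction from the paper.

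\textbf{The paper's route.} It starts from $\sum R_{A\bar C\bar DB}R_{C\bar A\bar BD}$ and $\sum R_{A\bar C\bar DB}R_{C\bar E\bar FD}R_{E\bar A\bar BF}$ and converts the outer indices with Remark~\ref{remark2.1}. It then expands $Z_C,Z_D$ in the real frame and drops the imaginary parts by a symmetric-versus-antisymmetric argument. The surviving cross terms ($R_{icdj}R_{ic'd'j}$, resp.\ $R_{iklj}R_{kc'd'l}R_{icdj}$ and its relatives) are handled by a chain of first Bianchi identities.

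\textbf{Your route for $\mathcal{C}^2$.} You push the real invariants into the complex frame, which is cleaner here. Since $\{Z_A,\bar Z_A\}$ is a unitary basis of $V^{\mathbb C}$, $|R|^2$ is the sum of $|R(E_1,\dots,E_4)|^2$ over that basis. The four admissible type patterns then give $|R|^2=4\sum|R_{A\bar BC\bar D}|^2=4\,\mathrm{tr}\,\mathcal{C}^2$, with no cancellation to check.

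\textbf{Your route for $\mathcal{C}^3$.} Your $2\times 2$ system closes exactly as you predict. Take the Hermitian matrix $\mathcal{K}_{(X\bar Y),(U\bar W)}=R_{X\bar Y\bar UW}$ on $V^{1,0}\otimes V^{0,1}$.
In $\sum R_{ijkl}R_{klst}R_{ijst}$, all eight admissible patterns coincide and give $8\,\mathrm{tr}\,\mathcal{K}^3$.
In $\sum R_{ikjl}R_{kslt}R_{isjt}$, only the types of $i$ and $j$ are free. The two equal-type patterns give $-\mathrm{tr}\,\mathcal{C}^3$ each. The two opposite-type patterns give $+\mathrm{tr}\,\mathcal{K}^3$ each, after reordering with $R_{A\bar BC\bar D}=R_{C\bar BA\bar D}$.
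Eliminating $\mathrm{tr}\,\mathcal{K}^3$ yields exactly the coefficients $-\frac12$ and $\frac18$. As a check, on a Riemann surface of curvature $K$ one has $\mathcal{C}=\mathcal{K}=K$, and the two invariants are $0$ and $8K^3$, consistent with this.

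\textbf{Comparison.} Your route buys transparency. The Bianchi identity enters only through the K\"ahler symmetries of $R_{A\bar BC\bar D}$, and the $\frac18$ term is identified as the cube of the K\"ahler curvature operator on $\mathfrak{u}(n)$. The paper's route needs no auxiliary operator and is a single direct expansion, but it is longer and its real-index manipulations are harder to verify.
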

\begin{proof}
From the proof of Theorem ~\ref{theorem1.1}(1), we already have $tr\mathcal{C} = \frac{s}{2}$.

Using the definition of Calabi operator
we compute the trace of ${\mathcal{C}^2}$:
\begin{align*}
  tr{\mathcal{C}^2} =& \frac{1}{4}\sum\limits_{A,B = 1}^n {g\left( {{\mathcal{C}^2}\left( {{Z_A} \odot {Z_B}} \right),\overline {{Z_A}}  \odot \overline {{Z_B}} } \right)}  \\
   =& \frac{1}{4}\sum\limits_{A,B,C,D = 1}^n {g\left( {\mathcal{C}\left( {{R_{A\overline C \overline D B}}{Z_C} \odot {Z_D}} \right),\overline {{Z_A}}  \odot \overline {{Z_B}} } \right)}  \\
   =& \frac{1}{4}\sum\limits_{A, \cdots ,F = 1}^n {g\left( {{R_{A\overline C \overline D B}}\left( {{R_{C\overline E \overline F D}}{Z_E} \odot {Z_F}} \right),\overline {{Z_A}}  \odot \overline {{Z_B}} } \right)}   \\
   =& \sum\limits_{A,B,C,D = 1}^n {{R_{A\overline C \overline D B}}{R_{C\overline A \overline B D}}}
   = \sum\limits_{i,j = 1}^{2n} {\sum\limits_{C,D = 1}^n {{R_{i\overline C \overline D j}}{R_{CijD}}} }   \\
   =& \frac{1}{4}\sum\limits_{i,j = 1}^{2n} {\sum\limits_{c,d = 1}^n {\left( {{R_{icdj}} - {R_{ic'd'j}} + \sqrt { - 1} {R_{ic'dj}} + \sqrt { - 1} {R_{icd'j}}} \right)}}\\
    &{{\left( {{R_{icdj}} - {R_{ic'd'j}} - \sqrt { - 1} {R_{ic'dj}} - \sqrt { - 1} {R_{icd'j}}} \right)} }  \hfill \\
   =& \frac{1}{4}\sum\limits_{i,j = 1}^{2n} {\sum\limits_{c,d = 1}^n {\left( {{{\left( {{R_{icdj}} - {R_{ic'd'j}}} \right)}^2} + {{\left( {{R_{ic'dj}} + {R_{icd'j}}} \right)}^2}} \right)} }   \\
   =& \frac{1}{4}\sum\limits_{i,j = 1}^{2n} {\sum\limits_{c,d = 1}^n {\left( {{R_{icdj}}^2 + {R_{ic'd'j}}^2 + {R_{ic'dj}}^2 + {R_{icd'j}}^2 - 2{R_{icdj}}{R_{ic'd'j}} + 2{R_{ic'dj}}{R_{icd'j}}} \right)} }   \\
   =& \frac{1}{4}\sum\limits_{i,j,k,l = 1}^{2n} {{R_{ijkl}}^2  } + \frac{1}{2} \sum\limits_{i,j = 1}^{2n} {\sum\limits_{c,d = 1}^n {\left( { - {R_{icdj}}{R_{ic'd'j}} + {R_{ic'dj}}{R_{icd'j}}} \right)} }.
\end{align*}
Now
\begin{align*}
   - \sum\limits_{i,j = 1}^{2n} {\sum\limits_{c,d = 1}^n {{R_{icdj}}{R_{ic'd'j}}} }  =&  - \sum\limits_{i,j = 1}^{2n} {\sum\limits_{c,d = 1}^n {{R_{icdj'}}{R_{ic'd'j'}}} }  = \sum\limits_{i,j = 1}^{2n} {\sum\limits_{c,d = 1}^n {{R_{icd'j}}{R_{ic'dj}}} }   \\
   =& \sum\limits_{i,j = 1}^{2n} {\sum\limits_{c,d = 1}^n {{R_{icd'j}}\left( {{R_{idc'j}} + {R_{ijdc'}}} \right)} }  \\
   =& \sum\limits_{i,j = 1}^{2n} {\sum\limits_{c,d = 1}^n {\left( {{R_{icdj}}{R_{idcj}} - {R_{icd'j}}{R_{ijd'c}}} \right)} }   \\
   =& \sum\limits_{i,j = 1}^{2n} {\sum\limits_{c,d = 1}^n {\left( {{R_{icdj}}{R_{idcj}} - \left( {{R_{id'cj}} + {R_{ijd'c}}} \right){R_{ijd'c}}} \right)} }   \\
   =& \sum\limits_{i,j = 1}^{2n} {\sum\limits_{c,d = 1}^n {\left( {{R_{icdj}}{R_{idcj}} - \frac{1}{2}{R_{ijd'c}}{R_{ijd'c}}} \right)} }   \\
   =& \sum\limits_{i,j = 1}^{2n} {\sum\limits_{c,d = 1}^n {\left( {\left( {{R_{icdj}}{R_{icdj}} - \frac{1}{2}{R_{ijdc}}{R_{ijdc}}} \right) - \frac{1}{2}{R_{ijd'c}}{R_{ijd'c}}} \right)} }
   \end{align*}
   \begin{align*}
   =& \sum\limits_{i,j,l = 1}^{2n} {\sum\limits_{c = 1}^n {\left( {\frac{1}{2}{R_{iclj}}{R_{iclj}} - \frac{1}{2}{R_{ijlc}}{R_{ijlc}}} \right)} }  \\
   =& 0 .
\end{align*}
Hence
$tr\left( {{\mathcal{C}^2}} \right) = \frac{1}{4}{\left| R \right|^2}$.

For the third power we proceed analogously:
\begin{align*}
  tr{\mathcal{C}^3} =& \frac{1}{4}\sum\limits_{A,B = 1}^n {g\left( {{\mathcal{C}^3}\left( {{Z_A} \odot {Z_B}} \right),\overline {{Z_A}}  \odot \overline {{Z_B}} } \right)}\\
   =& \frac{1}{4}\sum\limits_{A,B,C,D = 1}^n {g\left( {{\mathcal{C}^2}\left( {{R_{A\overline C \overline D B}}{Z_C} \odot {Z_D}} \right),\overline {{Z_A}}  \odot \overline {{Z_B}} } \right)}  \\
   =& \frac{1}{4}\sum\limits_{A, \cdots ,F = 1}^n {g\left( {{R_{A\overline C \overline D B}}\mathcal{C}\left( {{R_{C\overline E \overline F D}}{Z_E} \odot {Z_F}} \right),\overline {{Z_A}}  \odot \overline {{Z_B}} } \right)}\\
   =& \sum\limits_{A,B,C,D,E,F = 1}^n {{R_{A\overline C \overline D B}}{R_{C\overline E \overline F D}}{R_{E\overline A \overline B F}}}   \\
   =& \sum\limits_{i,j,k,l = 1}^{2n} {\sum\limits_{C,D = 1}^n {{R_{iklj}}{R_{k\overline C \overline D l}}{R_{iCDj}}} }\\
   =& \frac{1}{4}\sum\limits_{i,j,k,l = 1}^{2n} {\sum\limits_{c,d = 1}^n {{R_{iklj}}\left( {{R_{kcdl}} - {R_{kc'd'l}} + \sqrt { - 1} {R_{kc'dl}} + \sqrt { - 1} {R_{kcd'l}}} \right)}}\\
    &\times{{\left( {{R_{icdj}} - {R_{ic'd'j}} - \sqrt { - 1} {R_{ic'dj}} - \sqrt { - 1} {R_{icd'j}}} \right)} }   \\
   =& \frac{1}{4} \sum\limits_{i,j,k,l = 1}^{2n} {\sum\limits_{c,d = 1}^n {{R_{iklj}}\left( {\left( {{R_{kcdl}} - {R_{kc'd'l}}} \right)\left( {{R_{icdj}} - {R_{ic'd'j}}} \right) + \left( {{R_{kc'dl}} + {R_{kcd'l}}} \right)\left( {{R_{ic'dj}} + {R_{icd'j}}} \right)} \right.} }   \\
  &\left. { - \sqrt { - 1} \left( {{R_{kcdl}} - {R_{kc'd'l}}} \right)\left( {{R_{ic'dj}} + {R_{icd'j}}} \right) + \sqrt { - 1} \left( {{R_{icdj}} - {R_{ic'd'j}}} \right)\left( {{R_{kc'dl}} + {R_{kcd'l}}} \right)} \right) ,
\end{align*}
the last two terms
$${\sqrt { - 1} }{{R_{iklj}}\left( - {\left( {{R_{kcdl}} - {R_{kc'd'l}}} \right)\left( {{R_{ic'dj}} + {R_{icd'j}}} \right) +  \left( {{R_{icdj}} - {R_{ic'd'j}}} \right)\left( {{R_{kc'dl}} + {R_{kcd'l}}} \right)} \right)}$$
vanish because the pairs
 $\left( {i,j} \right)$ and $\left( {k,l} \right)$ are symmetric in the first factor but antisymmetric in the second. Therefore
\begin{align*}
   tr{\mathcal{C}^3} =& \frac{1}{4} \sum\limits_{i,j = 1}^{2n} {\sum\limits_{c,d = 1}^n {{R_{iklj}}\left( {{R_{kcdl}}{R_{icdj}} - {R_{kcdl}}{R_{ic'd'j}} - {R_{kc'd'l}}{R_{icdj}} + {R_{kc'd'l}}{R_{ic'd'j}}} \right.} }  \hfill \\
    &\left. { + {R_{kc'dl}}{R_{ic'dj}} + {R_{kc'dl}}{R_{icd'j}} + {R_{kcd'l}}{R_{ic'dj}} + {R_{kcd'l}}{R_{icd'j}}} \right) \hfill \\
   =& \frac{1}{4} \sum\limits_{i,j,k,l,s,t = 1}^{2n} {{R_{iklj}}{R_{kstl}}{R_{istj}}}\\
    & + \frac{1}{4} \sum\limits_{i,j,k,l = 1}^{2n} {\sum\limits_{c,d = 1}^n {{R_{iklj}}\left( { - {R_{kcdl}}{R_{ic'd'j}} - {R_{kc'd'l}}{R_{icdj}} + {R_{kc'dl}}{R_{icd'j}} + {R_{kcd'l}}{R_{ic'dj}}} \right)} }.
\end{align*}
Now
\begin{align*}
   &- \sum\limits_{i,j,k,l = 1}^{2n} {\sum\limits_{c,d = 1}^n {{R_{iklj}}{R_{kc'd'l}}{R_{icdj}}} }  = \sum\limits_{i,j,k,l = 1}^{2n} {\sum\limits_{c,d = 1}^n {{R_{iklj}}{R_{kc'dl}}{R_{icd'j}}} }  \hfill \\
   =&  - \sum\limits_{i,j,k,l = 1}^{2n} {\sum\limits_{c,d = 1}^n {{R_{iklj}}{R_{kcdl}}{R_{ic'd'j}}} }  = \sum\limits_{i,j,k,l = 1}^{2n} {\sum\limits_{c,d = 1}^n {{R_{iklj}}{R_{kcd'l}}{R_{ic'dj}}} }  \hfill \\
   =& \sum\limits_{i,j,k,l = 1}^{2n} {\sum\limits_{c,d = 1}^n {{R_{iklj}}{R_{kcd'l}}\left( {{R_{idc'j}} + {R_{ijdc'}}} \right)} }  \hfill \\
   =& \sum\limits_{i,j,k,l = 1}^{2n} {\sum\limits_{c,d = 1}^n {{R_{iklj}}{R_{kcdl}}{R_{idcj}} - {R_{iklj}}{R_{kcd'l}}{R_{ijd'c}}} }  \hfill \\
   =& \sum\limits_{i,j,k,l = 1}^{2n} {\sum\limits_{c,d = 1}^n {{R_{iklj}}{R_{kcdl}}\left( {{R_{icdj}} + {R_{ijcd}}} \right) - {R_{iklj}}\left( {{R_{kd'cl}} + {R_{kld'c}}} \right){R_{ijd'c}}} }  \hfill \\
   =& \sum\limits_{i,j,k,l = 1}^{2n} {\sum\limits_{c,d = 1}^n {{R_{iklj}}{R_{kcdl}}{R_{icdj}} - {R_{iklj}}{R_{kcdl}}{R_{ijdc}} - \frac{1}{2}{R_{iklj}}{R_{kld'c}}{R_{ijd'c}}} }  \hfill \\
   =& \sum\limits_{i,j,k,l = 1}^{2n} {\sum\limits_{c,d = 1}^n {{R_{iklj}}{R_{kcdl}}{R_{icdj}} - \frac{1}{2}{R_{iklj}}{R_{kldc}}{R_{ijdc}} - \frac{1}{2}{R_{iklj}}{R_{kld'c}}{R_{ijd'c}}} }  \hfill \\
   =& \sum\limits_{i,j,k,l,s,t = 1}^{2n} { - \frac{1}{4}{R_{ikjl}}{R_{kslt}}{R_{isjt}} + \frac{1}{4}{R_{ikjl}}{R_{klst}}{R_{ijst}}}  \hfill \\
   =& \sum\limits_{i,j,k,l,s,t = 1}^{2n} { - \frac{1}{4}{R_{ikjl}}{R_{kslt}}{R_{isjt}} + \frac{1}{8}{R_{ijkl}}{R_{klst}}{R_{ijst}}} .
\end{align*}
Consequently
$$ tr{\mathcal{C}^3} =  \sum\limits_{i,j,k,l,s,t = 1}^{2n} { - \frac{1}{2}{R_{ikjl}}{R_{kslt}}{R_{isjt}} + \frac{1}{8}{R_{ijkl}}{R_{klst}}{R_{ijst}}}. $$
\end{proof}

\begin{lemma}
\label{lemma4.3}
Let $T$ be a K$\ddot a$hler algebraic curvature tensor, and let $\left\{ {{S^\alpha }} \right\}_{\alpha  = 1}^N$ be an orthonormal basis of ${ \odot ^2}{V^{1,0}}$. Then
 $$ \sum\limits_{\alpha  = 1}^N {{{\left| {{S^\alpha }T} \right|}^2}}=n{\left| T \right|^2} - 2{\left| {Ric^T} \right|^2} $$
and
$${\left| {S^{\alpha}T} \right|^2} \le 4\left( {\frac{1}{2}{{\left| T \right|}^2} - \frac{1}{n}{{\left| {Ri{c^T}} \right|}^2}} \right). $$
\end{lemma}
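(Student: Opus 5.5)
The sum $\sum_\alpha |S^\alpha T|^2$ does not depend on the choice of orthonormal basis of $\odot^2 V^{1,0}$. So we compute it with the standard basis
$$\Bigl\{\tfrac{1}{\sqrt2}Z_A\odot Z_B\Bigr\}_{A<B}\cup\Bigl\{\tfrac12 Z_A\odot Z_A\Bigr\}_{A=1}^n .$$
The sum then equals $\frac14\sum_{A,B=1}^n g\bigl((Z_A\odot Z_B)T,\overline{(Z_A\odot Z_B)T}\bigr)$. This is exactly the diagonal case $C=A$, $D=B$ of the formula obtained in the proof of Proposition~\ref{pro4.1}:
\begin{align*}
g\bigl((Z_A\odot Z_B)T,\overline{(Z_C\odot Z_D)T}\bigr)=&\,8\sum_{i,j}\bigl(T_{B\bar Cij}T_{A\bar Dij}+T_{B\bar Dij}T_{A\bar Cij}\bigr)\\
&+4\sum_{i,j,k}\bigl(\delta_{AC}T_{Bijk}T_{\bar Dijk}+\cdots\bigr).
\end{align*}
Summing over $A,B$ with $C=A$, $D=B$ leaves two kinds of contraction.

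\emph{The $\delta$-terms.} Two of the four survive, namely $\delta_{AA}T_{Bijk}T_{\bar Bijk}$ and $\delta_{BB}T_{Aijk}T_{\bar Aijk}$; the other two carry $\delta_{AB}$ and give only diagonal contributions. By Remark~\ref{remark2.1}, $\sum_B T_{Bijk}T_{\bar Bijk}=\frac12|T|^2$. Each surviving $\delta_{AA}$-term therefore contributes $n\cdot\frac12|T|^2$, and together they give a multiple of $n|T|^2$.

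\emph{The $T_{A\bar A}$-type terms.} These are $\sum_{A,B}T_{B\bar Aij}T_{A\bar Bij}$ and $\sum_{A,B}T_{B\bar Bij}T_{A\bar Aij}$. For the second, the Kähler Ricci identity gives $\sum_B T_{B\bar Bij}=\frac{\sqrt{-1}}{2}\cdot(\text{Ricci form of }T)_{ij}$ up to sign, by the computation of $Ric^T(e_i,e_k)=\sum_j T(e_i,Je_k,e_j,Je_j)$. Hence its square is a multiple of $|Ric^T|^2$. For the first, Bianchi together with the $J$-invariance rewrites it as a combination of $|T|^2$ and $|Ric^T|^2$. The diagonal $\delta_{AB}$ contributions should similarly reduce to $|T|^2$ via Remark~\ref{remark2.1}.

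Collecting coefficients should yield $n|T|^2-2|Ric^T|^2$. This bookkeeping of constants, in particular handling the first $T_{A\bar A}$-type term cleanly, is the main obstacle. As a sanity check I would test the identity on the constant holomorphic sectional curvature tensor, where all quantities are explicit.

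For the pointwise bound, fix a unit $S=S^\alpha$ and diagonalize it by Takagi: choose a unitary frame with $S=\sum_A\sigma_A Z_A\otimes Z_A$, where $\sigma_A\ge0$ and $\sum_A\sigma_A^2$ is normalized by $|S|=1$. Then $|ST|^2$ is computed by the same expansion with $S_{AB}=\sigma_A\delta_{AB}$. The $\delta$-terms give $\sum_A\sigma_A^2\sum_{jkl}|T_{Ajkl}|^2\le \max_A\sigma_A^2\cdot\frac12|T|^2$. The other terms are controlled by Cauchy–Schwarz against the $Ric^T$ projection.

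Alternatively, and more simply, one can hope to use symmetry. Averaging $|ST|^2$ over the unitary group orbit of $S$ gives $\frac1N\sum_\alpha|S^\alpha T|^2$. One would then need a maximization argument showing that the maximum is attained at $S=\frac12 Z_A\odot Z_A$, where the direct computation gives $4\bigl(\frac12|T|^2-\frac1n|Ric^T|^2\bigr)$ after subtracting the trace part, using $|Ric^T|^2\ge$ (its $Z_A$-component squared)$\cdot n$ by Cauchy–Schwarz. I would try the Takagi-diagonal computation first, since it reduces the inequality to a weighted version of the identity with weights $\sigma_A^2$, bounded by their maximum.
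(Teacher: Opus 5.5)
Your plan for the identity follows the paper's route: contract the Proposition~\ref{pro4.1} formula along $C=A$, $D=B$. The bookkeeping you flag as the main obstacle is routine.

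\begin{itemize}
\item $\sum_{A,B}\sum_{i,j}T_{B\bar Aij}T_{A\bar Bij}=-\frac12|T|^2$. This follows from Remark~\ref{remark2.1} applied to the first two slots, together with antisymmetry and $T(Z_A,Z_B,\cdot,\cdot)=0$. No Bianchi identity is needed and there is no Ricci term.
\item $\sum_{i,j}(\sum_BT_{B\bar Bij})^2=-|Ric^T|^2$, because $T_{B\bar Bij}=\sqrt{-1}\,T_{bb'ij}$.
\item All four $\delta$-terms contribute: two give $n\cdot\frac12|T|^2$ each, and the two carrying $\delta_{AB}$ give $\frac12|T|^2$ each.
\end{itemize}

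The total is therefore $\frac14(-4|T|^2-8|Ric^T|^2+4(n+1)|T|^2)=n|T|^2-2|Ric^T|^2$.

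The genuine gap is in the pointwise bound. After Takagi-diagonalizing $S=\sum_A\sigma_AZ_A\otimes Z_A$, the exact expansion is not a weighted sum with weights $\sigma_A^2$. It is
\[
|ST|^2=4\sum_{A,C}\sigma_A\sigma_C\sum_{i,j}T_{A\bar Cij}T_{A\bar Cij}+4\sum_A\sigma_A^2\sum_{j,k,l}|T_{Ajkl}|^2 .
\]
For a general K\"ahler curvature tensor the terms with $A\neq C$ are nonzero and of indefinite sign, and nothing in your plan controls them.

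Even the diagonal part does not close the way you propose. Take $\sigma_1=1$. Bounding the $\delta$-part by $\max_A\sigma_A^2\cdot\frac12|T|^2$ leaves the Ricci-type term $-4\sum_{k,l}T_{11'kl}^2$. That term need not be $\le-\frac4n|Ric^T|^2$, because $\sum_a x_a^2\ge\frac1n(\sum_a x_a)^2$ requires the unweighted sum over all $a$.

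The missing idea is to apply Cauchy--Schwarz componentwise before expanding:
\[
\Big|\sum_A\sigma_A((Z_A\otimes Z_A)T)_{ijkl}\Big|^2\le\Big(\sum_B\sigma_B^2\Big)\sum_A|((Z_A\otimes Z_A)T)_{ijkl}|^2 .
\]
For unit $S$ this gives $|ST|^2\le\sum_A|(Z_A\otimes Z_A)T|^2$, with no cross terms and uniform weights. The right-hand side equals $4(\frac12|T|^2-\sum_{k,l}\sum_aT_{aa'kl}^2)$, which is at most $4(\frac12|T|^2-\frac1n|Ric^T|^2)$. This is the paper's argument.

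Your alternative route does not replace this step. The Schur averaging is correct, but there is no reason the maximum of $|ST|^2$ should occur at a rank-one $S$. Moreover, at $S=Z_A\otimes Z_A$ the value is $4(\sum_{j,k,l}|T_{Ajkl}|^2-\sum_{k,l}T_{aa'kl}^2)$, not $4(\frac12|T|^2-\frac1n|Ric^T|^2)$. The latter expression appears only after summing over all $A$.
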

\begin{proof}
\begin{align*}
  \sum\limits_{\alpha  = 1}^N {{{\left| {{S^\alpha }T} \right|}^2}}  =& \frac{1}{4}\sum\limits_{A,B = 1}^n {g\left( {\left( {{Z_A} \odot {Z_B}} \right)T,\left( {\overline {{Z_A}}  \odot \overline {{Z_B}} } \right)T} \right)}  \hfill \\
   =& 2\sum\limits_{A,B = 1}^n {\sum\limits_{i,j = 1}^{2n} {\left( {{T_{B\overline A ij}}{T_{A\overline B ij}} + {T_{B\overline B ij}}{T_{A\overline A ij}}} \right)} }  + 2\left( {n + 1} \right)\sum\limits_{A = 1}^n {\sum\limits_{i,j,k = 1}^{2n} {{T_{Aijk}}{T_{\overline A ijk}}} }  \hfill \\
   =& 2\left( { - \frac{1}{2}{{\left| T \right|}^2} - {{\left| {Ric^T } \right|}^2}} \right) + 2\left( {n + 1} \right)\left( {\frac{1}{2}{{\left| T \right|}^2}} \right) \hfill \\
   =& n{\left| T \right|^2} - 2{\left| {Ric^T} \right|^2} .
\end{align*}
Given a unit vector $S \in { \odot ^2}{V^{1,0}}$, choose an orthonormal basis $\left\{ {{Z_A}} \right\}_{A = 1}^n$ of $V^{1,0}$ such that
$$S = \sum\limits_{A = 1}^n {{\rho _A}{Z_A} \otimes {Z_A}},\ \ {\left| S \right|^2} = \sum\limits_{A = 1}^n {\rho _A^2}.$$ Thus
\begin{align*}
  {\left| {ST} \right|^2} =& \sum\limits_{i,j,k,l = 1}^{2n} {{{\left( {ST} \right)}_{ijkl}}\overline {{{\left( {ST} \right)}_{ijkl}}} }  \hfill \\
   =& \sum\limits_{i,j,k,l = 1}^{2n} {{{\left| {\sum\limits_{A = 1}^n {{\rho _A}\left( {T\left( {{Z_A} \otimes {Z_A}\left( {{e_i}} \right),{e_j},{e_k},{e_l}} \right) +  \cdots  + T\left( {{e_i},{e_j},{e_k},{Z_A} \otimes {Z_A}\left( {{e_l}} \right)} \right)} \right)} } \right|}^2}}  \hfill \\
   \le& \sum\limits_{i,j,k,l = 1}^{2n} {\left( {\sum\limits_{B = 1}^n {{\rho _B}^2} \sum\limits_{A = 1}^n {{{\left| {\left( {T\left( {{Z_A} \otimes {Z_A}\left( {{e_i}} \right),{e_j},{e_k},{e_l}} \right) +  \cdots  + T\left( {{e_i},{e_j},{e_k},{Z_A} \otimes {Z_A}\left( {{e_l}} \right)} \right)} \right)} \right|}^2}} } \right)}  \hfill \\
   =& 4{\left| S \right|^2}\left( {\sum\limits_{j,k,l = 1}^{2n} {\sum\limits_{A,E = 1}^n {T\left( {{Z_A} \otimes {Z_A}\left( {\overline {{Z_E}} } \right),{e_j},{e_k},{e_l}} \right)T\left( {\overline {{Z_A}}  \otimes \overline {{Z_A}} \left( {{Z_E}} \right),{e_j},{e_k},{e_l}} \right)} } } \right. \hfill \\
   &+ \sum\limits_{k,l = 1}^{2n} {\sum\limits_{A,E,F = 1}^n {T\left( {{Z_A} \otimes {Z_A}\left( {\overline {{Z_E}} } \right),\overline {{Z_F}} ,{e_k},{e_l}} \right)T\left( {{Z_E},\overline {{Z_A}}  \otimes \overline {{Z_A}} \left( {{Z_F}} \right),{e_k},{e_l}} \right)} }  \hfill \\
  &\left. { + 2\sum\limits_{j,l = 1}^{2n} {\sum\limits_{A,E,F = 1}^n {T\left( {{Z_A} \otimes {Z_A}\left( {\overline {{Z_E}} } \right),{e_j},\overline {{Z_F}} ,{e_l}} \right)T\left( {{Z_E},{e_j},\overline {{Z_A}}  \otimes \overline {{Z_A}} \left( {{Z_F}} \right),{e_l}} \right)} } } \right) \hfill \\
   =& 4{\left| S \right|^2}\left( {\sum\limits_{j,k,l = 1}^{2n} {\sum\limits_{A = 1}^n {{T_{Ajkl}}{T_{\overline A jkl}}}  + \sum\limits_{k,l = 1}^{2n} {\sum\limits_{A = 1}^n {\left( {{T_{A\overline A kl}}{T_{A\overline A kl}} + 2{T_{Ak\overline A l}}{T_{Ak\overline A l}}} \right)} } } } \right)\\
   \le& 4{\left| S \right|^2}\left( {\frac{1}{2}{{\left| T \right|}^2} - \frac{1}{n}{{\left| {Ri{c^T}} \right|}^2}} \right),
\end{align*}
 because
\begin{align*}
  \sum\limits_{k,l = 1}^{2n} {\sum\limits_{A = 1}^n {{T_{A\overline A kl}}{T_{A\overline A kl}}} }  =&  - \sum\limits_{k,l = 1}^{2n} {\sum\limits_{a = 1}^n {{T_{aa'kl}}{T_{aa'kl}}} }  \hfill \\
   \le&  - \frac{1}{n}\sum\limits_{k,l = 1}^{2n} {{{\left( {\sum\limits_{a = 1}^n {{T_{aa'kl}}} } \right)}^2}}  \hfill \\
   =&  - \frac{1}{n}\sum\limits_{k,l = 1}^{2n} {{{\left( {Ri{c^T}\left( {{e_k},J{e_l}} \right)} \right)}^2}}  \hfill \\
   =&  - \frac{1}{n}{\left| {Ri{c^T}} \right|^2} .
\end{align*}
\end{proof}

For the standard complex projective space $\left( \mathbb{CP}^n,g_{FS} \right)$ with the Fubini-Study metric, the components of the Riemann, Ricci and scalar curvatures are
$${\left( {{R_{\mathbb{C}{\mathbb{P}^n}}}} \right)_{ijkl}} = {g_{ik}}{g_{jl}} - {g_{il}}{g_{jk}} + {g_{i'k}}{g_{j'l}} - {g_{i'l}}{g_{j'k}} + 2{g_{i'j}}{g_{k'l}},$$
$${\left( {Ric_{\mathbb{C}{\mathbb{P}^n}}} \right)_{ik}} = 2\left( {n + 1} \right){g_{ik}}$$
and
$$s_{\mathbb{C}{\mathbb{P}^n}} = 4n\left( {n + 1} \right).$$
\begin{proposition}
\label{proposition4.4}
Let $R$ be the Riemann curvature tensor of an $n$-dimensional K$\ddot a$hler manifold. Define $${R^m} = R - {\frac{{ms}}{{4n\left( {n + 1} \right)}}}R_{\mathbb{CP}^n}.$$ Then
$${\left| R \right|^2} = {\left| {{R^m}} \right|^2} - \frac{{2m\left( {m - 2} \right)}}{{n\left( {n + 1} \right)}}{s^2},\ \
{\left| {Ric} \right|^2} = {\left| {Ri{c^m}} \right|^2} - \frac{{m\left( {m - 2} \right)}}{{2n}}{s^2}.$$
\end{proposition}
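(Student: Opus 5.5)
The plan is a direct expansion of the squared norms. Set $c=\frac{ms}{4n(n+1)}$, so that $R^m=R-cR_{\mathbb{CP}^n}$ and, by the displayed formula for the Ricci tensor of $\mathbb{CP}^n$, $Ric^m=Ric-2c(n+1)g$. Both identities then follow from
$$|R^m|^2=|R|^2-2c\langle R,R_{\mathbb{CP}^n}\rangle+c^2|R_{\mathbb{CP}^n}|^2,\qquad |Ric^m|^2=|Ric|^2-4c(n+1)\langle Ric,g\rangle+4c^2(n+1)^2|g|^2,$$
once we know the pairings of $R$ and $Ric$ with the model tensors. Throughout we use $c\,s_{\mathbb{CP}^n}=c\cdot 4n(n+1)=ms$.

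\emph{Ricci identity.} This one is immediate. We have $\langle Ric,g\rangle=s$ and $|g|^2=2n$. Hence
$$|Ric^m|^2-|Ric|^2=2c(n+1)s\Big(-2+\tfrac{4c(n+1)n}{s}\Big)=2c(n+1)s(m-2).$$
Since $2c(n+1)=\frac{ms}{2n}$, the right-hand side equals $\frac{m(m-2)}{2n}s^2$, which is the claimed identity.

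\emph{Key step for the curvature identity: show $\langle R,R_{\mathbb{CP}^n}\rangle=8s$ for every Kähler algebraic curvature tensor $R$.} Contract $R_{ijkl}$ against the five terms of $(R_{\mathbb{CP}^n})_{ijkl}$ one at a time.
\begin{itemize}
\item The terms $g_{ik}g_{jl}-g_{il}g_{jk}$ give $2s$.
\item The terms $g_{i'k}g_{j'l}-g_{i'l}g_{j'k}$ produce sums such as $\sum_{i,j}R(e_i,e_j,Je_i,Je_j)$. Using $J$-invariance and the first Bianchi identity, as in the Ricci computation of Section 2.3, these reduce to $\sum_{i,j}R(e_i,Je_i,e_j,Je_j)$.
\item The term $2g_{i'j}g_{k'l}$ also gives a multiple of $\sum_{i,j}R(e_i,Je_i,e_j,Je_j)$.
\item By the proof of Theorem~\ref{theorem1.1}(1), $\sum_{a,b\le n}R(e_a,Je_a,e_b,Je_b)=s/2$, so the full sum over $1\le i,j\le 2n$ equals $2s$.
\end{itemize}
Collecting the terms should give $8s$. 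The main obstacle is tracking signs and factors of $2$ in the $J$-terms. To guard against errors, I will check the constant independently by taking $R=R_{\mathbb{CP}^n}$, which must give $|R_{\mathbb{CP}^n}|^2=8s_{\mathbb{CP}^n}=32n(n+1)$. This value can be computed from the explicit formula, or read off from Proposition~\ref{pro4.2} ($tr\,\mathcal C^2=\frac14|R|^2$) using the known Calabi spectrum of $\mathbb{CP}^n$.

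\emph{Conclusion.} With $\langle R,R_{\mathbb{CP}^n}\rangle=8s$ and $|R_{\mathbb{CP}^n}|^2=8s_{\mathbb{CP}^n}$,
$$|R^m|^2-|R|^2=-16cs+8c^2s_{\mathbb{CP}^n}=8cs(m-2)=\frac{2m(m-2)}{n(n+1)}s^2,$$
which rearranges to the stated identity for $|R|^2$.
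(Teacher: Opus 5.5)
Your proposal is correct and is essentially the paper's own proof: expand the squared norms and use $\langle R,R_{\mathbb{CP}^n}\rangle=8s$, $|R_{\mathbb{CP}^n}|^2=8s_{\mathbb{CP}^n}=32n(n+1)$ (the case $R=R_{\mathbb{CP}^n}$ of that same pairing), $\langle Ric,g\rangle=s$ and $|g|^2=2n$. When you fill in the $J$-terms, note that the Section~2.3 reduction gives a half-range sum, $\sum_{i,j}R(e_i,e_j,Je_i,Je_j)=\sum_{i}\sum_{b\le n}R(e_i,Je_i,e_b,Je_b)=s$, not the full-range value $2s$, so the five terms contribute $2s+s+s+4s=8s$, in agreement with the paper's $2s+2\sum R_{iji'j'}+2\sum R_{ii'kk'}$.
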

\begin{proof}
A direct computation gives
$${\left| R \right|^2} = {\left| {{R^m} + \frac{{ms}}{{4n\left( {n + 1} \right)}}{R_{\mathbb{C}{\mathbb{P}^n}}}} \right|^2} = {\left| {{R^m}} \right|^2} + \frac{{ms}}{{2n\left( {n + 1} \right)}}\left\langle {R,{R_{\mathbb{C}{\mathbb{P}^n}}}} \right\rangle  - \frac{{{m^2}{s^2}}}{{16{n^2}{{\left( {n + 1} \right)}^2}}}{\left| {{R_{\mathbb{C}{\mathbb{P}^n}}}} \right|^2}.$$
Now
\begin{align*}
  \left\langle {R,{R_{\mathbb{C}{\mathbb{P}^n}}}} \right\rangle  =& \sum\limits_{i,j,k,l = 1}^{2n} {{R_{ijkl}}\left( {{g_{ik}}{g_{jl}} - {g_{il}}{g_{jk}} + {g_{i'k}}{g_{j'l}} - {g_{i'l}}{g_{j'k}} + 2{g_{i'j}}{g_{k'l}}} \right)}   \\
   =& 2s + 2\sum\limits_{i,j = 1}^{2n} {{R_{iji'j'}}}  + 2\sum\limits_{i,k = 1}^{2n} {{R_{ii'kk'}}}   \\
   =& 8s
\end{align*}
and
 $${\left| {{R_{\mathbb{C}{\mathbb{P}^n}}}} \right|^2} = 8{s_{\mathbb{C}{\mathbb{P}^n}}} = 32n\left( {n + 1} \right).$$
Substituting these values yields
$${\left| R \right|^2} = {\left| {{R^m}} \right|^2} + \frac{{4ms}}{{n\left( {n + 1} \right)}} - \frac{{2{m^2}{s^2}}}{{n\left( {n + 1} \right)}} = {\left| {{R^m}} \right|^2} - \frac{{2m\left( {m - 2} \right)}}{{n\left( {n + 1} \right)}}{s^2}.$$
Similarly,
$${\left| {Ric} \right|^2} = {\left| {Ri{c^m} + \frac{{ms}}{{2n}}g} \right|^2} = {\left| {Ri{c^m}} \right|^2} + \frac{{ms}}{n}\left\langle {Ric,g} \right\rangle  - \frac{{{m^2}{s^2}}}{{2n}} = {\left| {Ri{c^m}} \right|^2} - \frac{{m\left( {m - 2} \right)}}{{2n}}{s^2}.$$
\end{proof}
\begin{lemma}
\label{lemma4.5}
Let $\left(M,g,J\right)$ be an $n$-dimensional K$\ddot a$hler manifold, $\mathcal{C}$ its  Calabi curvature operator, and let
$${\lambda _1}\le{\lambda _2}\le \cdots \le {\lambda _N} \in \mathbb{R}\ \ \ (N = \frac{{n(n + 1)}}{2} = dim_{\mathbb{C}}\left( { \odot ^2}{V^{1,0}} \right))$$ be the eigenvalues of $\mathcal{C}$. Denote by ${{S^\alpha }} \in {{ \odot ^2}{V^{1,0}}}$ a unit eigenvector belonging to ${\lambda _\alpha}$. Then
$$\sum\limits_{\alpha  = 1}^N {{{\left| {{S^\alpha }R} \right|}^2}}  = \sum\limits_{\alpha  = 1}^N {{{\left| {{S^\alpha }{R^m}} \right|}^2}}  - m\left( {m - 2} \right)\frac{{n - 1}}{{n\left( {n + 1} \right)}}{s^2}$$
and
$$\sum\limits_{\alpha  = 1}^N {{\lambda _\alpha }{{\left| {{S^\alpha }R} \right|}^2}}  = \sum\limits_{\alpha  = 1}^N {{\lambda _\alpha }{{\left| {{S^\alpha }{R^m}} \right|}^2}}  + \frac{{4ms}}{{n\left( {n + 1} \right)}}\left( {{{\left| {Ric} \right|}^2} - 2\sum\limits_{\alpha  = 1}^N {{\lambda _\alpha }^2} } \right) - \frac{(n - 1){m^2}}{{{n^2}{{\left( {n + 1} \right)}^2}}}{s^3}.$$
\end{lemma}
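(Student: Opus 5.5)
The plan is to expand both sides in terms of $R^m$ and $R_{\mathbb{CP}^n}$ using the splitting $R=R^m+c\,R_{\mathbb{CP}^n}$ with $c=\frac{ms}{4n(n+1)}$. Both identities then reduce to quantities that Lemma~\ref{lemma4.3}, Proposition~\ref{pro4.1} and Proposition~\ref{pro4.2} already compute.

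\textbf{First identity.} By Lemma~\ref{lemma4.3}, $\sum_\alpha|S^\alpha T|^2=n|T|^2-2|Ric^T|^2$ for any orthonormal basis, so the choice of eigenbasis is irrelevant. I apply this to $T=R$ and $T=R^m$ and substitute Proposition~\ref{proposition4.4}. The difference is
$$-\frac{2nm(m-2)}{n(n+1)}s^2+\frac{m(m-2)}{n}s^2=-m(m-2)\frac{n-1}{n(n+1)}s^2,$$
which is the claimed correction term.

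\textbf{Second identity: expansion.} For each unit $S^\alpha$ I expand
$$|S^\alpha R^m|^2=|S^\alpha R|^2-2c\,\mathrm{Re}\,g\big(S^\alpha R,\overline{S^\alpha R_{\mathbb{CP}^n}}\big)+c^2|S^\alpha R_{\mathbb{CP}^n}|^2,$$
then multiply by $\lambda_\alpha$ and sum over $\alpha$.

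\textbf{The $c^2$ term.} I claim $|SR_{\mathbb{CP}^n}|^2$ is the same for every unit $S\in\odot^2V^{1,0}$. The reason is that $R_{\mathbb{CP}^n}$ is built from $g$ and $\omega=g(J\cdot,\cdot)$. Each $S$ is a symmetric endomorphism anticommuting with $J$ (Lemma~\ref{lemma2.3}), so it annihilates $\omega$ as a derivation. Hence $SR_{\mathbb{CP}^n}$ is a fixed universal expression linear in $S$, and $U(n)$-invariance makes its norm a constant times $|S|^2$.

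The constant follows from Lemma~\ref{lemma4.3}:
$$\sum_\alpha|S^\alpha R_{\mathbb{CP}^n}|^2=n\cdot 32n(n+1)-2\cdot 8n(n+1)^2=16n(n+1)(n-1),$$
so $|SR_{\mathbb{CP}^n}|^2=32(n-1)$. Since $\sum\lambda_\alpha=\operatorname{tr}\mathcal C=s/2$, the $c^2$ contribution is
$$c^2\cdot32(n-1)\cdot\frac s2=\frac{(n-1)m^2s^3}{n^2(n+1)^2},$$
matching the last term of the statement.

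\textbf{Cross term (main obstacle).} I need
$$\sum_\alpha\lambda_\alpha\,\mathrm{Re}\,g\big(S^\alpha R,\overline{S^\alpha R_{\mathbb{CP}^n}}\big)=8\Big(|Ric|^2-2\sum_\alpha\lambda_\alpha^2\Big).$$
The right-hand side of Proposition~\ref{pro4.1} is a symmetric bilinear form $Q(T,T)$ in $T$, and the left-hand side is the corresponding quadratic form $\sum\lambda_\alpha|S^\alpha T|^2$. Polarizing gives that the left side above equals
$$Q(R,R_{\mathbb{CP}^n})=-\tfrac12\sum R_{sptq}\big(R_{spij}(R_{\mathbb{CP}^n})_{tqij}+(R_{\mathbb{CP}^n})_{spij}R_{tqij}\big)+\sum R_{ts}\big(R_{tijk}(R_{\mathbb{CP}^n})_{sijk}+(R_{\mathbb{CP}^n})_{tijk}R_{sijk}\big).$$

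I then insert the explicit formula for $R_{\mathbb{CP}^n}$ and contract. In the first group, contracting $R_{\mathbb{CP}^n}$ against the pair $(i,j)$ of a Kähler curvature tensor acts as a multiple of the identity on $(s,p)$: the $g\wedge g$ and $J$-twisted terms each give a multiple of $R_{tqsp}$, and the $\omega\otimes\omega$ term gives a Ricci-type contraction $\sum_i R_{tqii'}$. This should produce a multiple of $|R|^2$ together with $|Ric|^2$. In the second group, the contraction $\sum R_{tijk}(R_{\mathbb{CP}^n})_{sijk}$ is a multiple of $Ric_{ts}$, giving $|Ric|^2$. Finally I replace $|R|^2$ by $4\sum\lambda_\alpha^2=4\operatorname{tr}\mathcal C^2$ from Proposition~\ref{pro4.2}.

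Keeping track of the constants and the Kähler symmetries (e.g.\ $R_{iji'j'}$-type identities already used in Proposition~\ref{proposition4.4}) in these contractions is where I expect the real bookkeeping difficulty. The target coefficients $8$ and $-16$ serve as the check.
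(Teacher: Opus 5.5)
Your proposal is correct, and the bookkeeping you left open does close. Set $\sigma_{sp}=\sum_{i}R(e_s,e_p,e_i,Je_i)=-2Ric(e_s,Je_p)$.

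\emph{First group.} One finds $\sum_{i,j}R_{spij}(R_{\mathbb{CP}^n})_{tqij}=4R_{sptq}+2g(Je_t,e_q)\,\sigma_{sp}$. Hence the first group of your $Q(R,R_{\mathbb{CP}^n})$ equals $-4|R|^2-2\sum_{s,p}\sigma_{sp}^2=-4|R|^2-8|Ric|^2$.

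\emph{Second group.} One finds $\sum_{i,j,k}R_{tijk}(R_{\mathbb{CP}^n})_{sijk}=8R_{ts}$: the four $g$- and $J$-twisted terms give $R_{ts}$ each, and the $\omega\otimes\omega$ term gives $4R_{ts}$. So the second group equals $16|Ric|^2$.

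Thus $Q(R,R_{\mathbb{CP}^n})=8|Ric|^2-4|R|^2=8\bigl(|Ric|^2-2\sum_\alpha\lambda_\alpha^2\bigr)$ by Proposition~\ref{pro4.2}, exactly the target. The polarization itself is legitimate, since both $R$ and $R_{\mathbb{CP}^n}$ are K\"ahler algebraic curvature tensors and both sides of Proposition~\ref{pro4.1} are real quadratic forms in $T$.

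Your route differs from the paper's in the second identity.

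\emph{The paper's route.} It uses Lemma~\ref{lemma2.3} to write $S^\alpha R_{\mathbb{CP}^n}$ explicitly as $2(S^\alpha_{ik}g_{jl}+g_{ik}S^\alpha_{jl}-S^\alpha_{il}g_{jk}-g_{il}S^\alpha_{jk})$. The $c^2$ term is then the norm of this Kulkarni--Nomizu-type product with a traceless $S^\alpha$. The cross term comes from pairing it with $\overline{S^\alpha}R$ and using the spectral decomposition $\sum_\alpha\lambda_\alpha S^\alpha\otimes\overline{S^\alpha}$ of $\mathcal{C}$, which produces $|Ric|^2$ and $\sum_\alpha\lambda_\alpha^2$ directly.

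\emph{Your route.} You get the constant $32(n-1)$ from $U(n)$-invariance plus the trace identity of Lemma~\ref{lemma4.3}. The Schur step is a detour: your own remark that $S$ kills $\omega$ already yields the explicit formula above, whose norm is immediate. You then handle the cross term by polarizing Proposition~\ref{pro4.1}, which turns the eigenvalue-weighted sum into a plain contraction of $R$ against $R_{\mathbb{CP}^n}$.

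\emph{What each buys.} Your version is more structural, and every step is a routine, checkable contraction. However, it makes this lemma rest on the long computations of Propositions~\ref{pro4.1} and~\ref{pro4.2}. The paper's argument never goes through Proposition~\ref{pro4.1} and gets $\sum_\alpha\lambda_\alpha^2$ straight from the eigenvector relations.
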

\begin{proof}
For the first  identity,  Lemma \ref{lemma4.3} and Proposition \ref{proposition4.4} give
\begin{align*}
  \sum\limits_{\alpha  = 1}^N {{{\left| {{S^\alpha }R} \right|}^2}}  =& n{\left| R \right|^2} - 2{\left| {Ric} \right|^2} \hfill \\
   =& n\left( {{{\left| {{R^m}} \right|}^2} - \frac{{2m\left( {m - 2} \right)}}{{n\left( {n + 1} \right)}}{s^2}} \right) - 2\left( {{{\left| {Ri{c^m}} \right|}^2} - \frac{{m\left( {m - 2} \right)}}{{2n}}{s^2}} \right) \hfill \\
   =& \sum\limits_{\alpha  = 1}^N {{{\left| {{S^\alpha }{R^m}} \right|}^2}}  - \frac{{\left( {n - 1} \right)m\left( {m - 2} \right)}}{{n\left( {n + 1} \right)}}{s^2}.
\end{align*}
For the second identity, write
\begin{align*}
{\left| {{S^\alpha }R} \right|^2} =& {\text{ }}{\left| {{S^\alpha }\left( {{R^m} + \frac{{ms}}{{4n\left( {n + 1} \right)}}{R_{\mathbb{C}{\mathbb{P}^n}}}} \right)} \right|^2}\\
 =& {\left| {{S^\alpha }{R^m}} \right|^2} + \frac{{ms}}{{2n\left( {n + 1} \right)}} \cdot Re \left( g\left( {{S^\alpha }{R_{\mathbb{C}{\mathbb{P}^n}}},\overline {{S^\alpha }} R} \right) \right) - \frac{{{m^2}{s^2}}}{{16{n^2}{{\left( {n + 1} \right)}^2}}}{\left| {{S^\alpha }{R_{\mathbb{C}{\mathbb{P}^n}}}} \right|^2}.
\end{align*}
By  Lemma \ref{lemma2.3}, we have
\begin{align*}
  {\left( {{S^\alpha }{R_{\mathbb{C}{\mathbb{P}^n}}}} \right)_{ijkl}} =& S_{it}^\alpha {R_{\mathbb{C}{\mathbb{P}^n}}}\left( {{e_t},{e_j},{e_k},{e_l}} \right) + S_{jt}^\alpha {R_{\mathbb{C}{\mathbb{P}^n}}}\left( {{e_i},{e_t},{e_k},{e_l}} \right) \hfill \\
   &+ S_{kt}^\alpha {R_{\mathbb{C}{\mathbb{P}^n}}}\left( {{e_i},{e_j},{e_t},{e_l}} \right) + S_{lt}^\alpha {R_{\mathbb{C}{\mathbb{P}^n}}}\left( {{e_i},{e_j},{e_k},{e_t}} \right) \hfill \\
   =& S_{it}^\alpha \left( {{g_{tk}}{g_{jl}} - {g_{tl}}{g_{jk}} + {g_{t'k}}{g_{j'l}} - {g_{t'l}}{g_{j'k}} + 2{g_{t'j}}{g_{k'l}}} \right) \hfill \\
   &+ S_{jt}^\alpha \left( {{g_{ik}}{g_{tl}} - {g_{il}}{g_{tk}} + {g_{i'k}}{g_{t'l}} - {g_{i'l}}{g_{t'k}} + 2{g_{i't}}{g_{k'l}}} \right) \hfill \\
   &+ S_{kt}^\alpha \left( {{g_{it}}{g_{jl}} - {g_{il}}{g_{jt}} + {g_{i't}}{g_{j'l}} - {g_{i'l}}{g_{j't}} + 2{g_{i'j}}{g_{t'l}}} \right) \hfill \\
   &+ S_{lt}^\alpha \left( {{g_{ik}}{g_{jt}} - {g_{it}}{g_{jk}} + {g_{i'k}}{g_{j't}} - {g_{i't}}{g_{j'k}} + 2{g_{i'j}}{g_{k't}}} \right) \hfill \\
   =& 2\left( {S_{ik}^\alpha {g_{jl}} + {g_{ik}}S_{jl}^\alpha  - S_{il}^\alpha {g_{jk}} - {g_{il}}S_{jk}^\alpha } \right).
\end{align*}
Hence
\begin{align*}
   {{\left| {{S^\alpha }{R_{\mathbb{C}{\mathbb{P}^n}}}} \right|}^2}  =& 4\left( {S_{ik}^\alpha {g_{jl}} + {g_{ik}}S_{jl}^\alpha  - S_{il}^\alpha {g_{jk}} - {g_{il}}S_{jk}^\alpha } \right)^2 \\
 =&4\left( {4\left( {2n - 2} \right){{\left| {{S^\alpha }} \right|}^2} + 4{{\left( {tr{S^\alpha }} \right)}^2}} \right)\\
 =& 32\left( {n - 1} \right)
\end{align*}
and
\begin{align*}
  \sum\limits_{\alpha  = 1}^N {{\lambda _\alpha }g\left( {{S^\alpha }{R_{\mathbb{C}{\mathbb{P}^n}}},{\overline {S^\alpha } } R} \right)}  =& 2\sum\limits_{\alpha  = 1}^N {{\lambda _\alpha }{{\left( {S_{ik}^\alpha {g_{jl}} + {g_{ik}}S_{jl}^\alpha  - S_{il}^\alpha {g_{jk}} - {g_{il}}S_{jk}^\alpha } \right)}}{{\left( {{\overline {S^\alpha } } R} \right)}_{ijkl}}}  \hfill \\
   =& 8\sum\limits_{\alpha  = 1}^N {{\lambda _\alpha }\left( {S_{ik}^\alpha {g_{jl}} + {g_{ik}}S_{jl}^\alpha  - S_{il}^\alpha {g_{jk}} - {g_{il}}S_{jk}^\alpha } \right){\overline {S_{ti}^\alpha } } {R_{tjkl}}}  \hfill \\
   =& 16\sum\limits_{\alpha  = 1}^N {{\lambda _\alpha }\left( {{R_{tk}}S_{ik}^\alpha {\overline {S_{tk}^\alpha } }  + {R_{tjkl}}S_{jl}^\alpha {\overline {S_{tk}^\alpha } } } \right)} \\
   =&8{\left| {Ric} \right|^2} - 16\sum\limits_{\alpha  = 1}^N {{\lambda _\alpha }^2} .
\end{align*}
Substituting these results, we get
\begin{align*}
  \sum\limits_{\alpha  = 1}^N {{\lambda _\alpha }{{\left| {{S^\alpha }R} \right|}^2}}  =& \sum\limits_{\alpha  = 1}^N {{\lambda _\alpha }\left( {{{\left| {{S^\alpha }{R^m}} \right|}^2} + \frac{{ms}}{{2n\left( {n + 1} \right)}}Reg\left( {{S^\alpha }{R_{\mathbb{C}{\mathbb{P}^n}}},\overline {{S^\alpha }} R} \right) - \frac{{{m^2}{s^2}}}{{16{n^2}{{\left( {n + 1} \right)}^2}}}{{\left| {{S^\alpha }{R_{\mathbb{C}{\mathbb{P}^n}}}} \right|}^2}} \right)}  \hfill \\
   =& \sum\limits_{\alpha  = 1}^N {{\lambda _\alpha }{{\left| {{S^\alpha }{R^m}} \right|}^2}}  + \frac{{ms}}{{2n\left( {n + 1} \right)}}\left( {8{{\left| {Ric} \right|}^2} - 16\sum\limits_{\alpha  = 1}^N {{\lambda _\alpha }^2} } \right) - \frac{{2\left( {n - 1} \right){m^2}{s^2}}}{{{n^2}{{\left( {n + 1} \right)}^2}}}\sum\limits_{\alpha  = 1}^N {{\lambda _\alpha }}  \hfill \\
   =& \sum\limits_{\alpha  = 1}^N {{\lambda _\alpha }{{\left| {{S^\alpha }{R^m}} \right|}^2}}  + \frac{{4ms}}{{n\left( {n + 1} \right)}}{\left| {Ric} \right|^2} - \frac{{8ms}}{{n\left( {n + 1} \right)}}\sum\limits_{\alpha  = 1}^N {{\lambda _\alpha }^2}  - \frac{{\left( {n - 1} \right){m^2}}}{{{n^2}{{\left( {n + 1} \right)}^2}}}{s^3}. \hfill \\
\end{align*}
\end{proof}
We can now derive a Bochner-type formula involving the eigenvalues of the Calabi operator.
\begin{lemma}
Under the same hypotheses as Lemma~\ref{lemma4.5}, and additionally, g is an Einstein metric here. For $\beta \le \frac{n}{2}$, if the smallest $\beta$ eigenvalues satisfy $${\lambda _1} +  \cdots  + {\lambda _\beta} \ge  - \beta\theta \bar \lambda, $$ where $\bar \lambda$ is  the average of all eigenvalues, then
\begin{equation}
\label{equation4.1}
\begin{aligned}
  \left\langle {\Delta R,R} \right\rangle  =& 2\sum\limits_{\alpha  = 1}^N {{\lambda _\alpha }{{\left| {{S^\alpha }{R^1}} \right|}^2}}  + 8\sum\limits_{\alpha  = 1}^N {{\lambda _\alpha }^3}  - 4\left( {n + 5} \right)\bar \lambda \sum\limits_{\alpha  = 1}^N {{\lambda _\alpha }^2}  + 2n\left( {n + 1} \right)\left( {n + 3} \right){{\bar \lambda }^3} \hfill \\
   \ge& 8\sum\limits_{\alpha  = 1}^N {{\lambda _\alpha }^3}  - \left( {8n\theta  + 4\left( {n + 5} \right)} \right)\bar \lambda \sum\limits_{\alpha  = 1}^N {{\lambda _\alpha }^2}  + \left( {4{n^2}\left( {n + 1} \right)\theta  + 2n\left( {n + 1} \right)\left( {n + 3} \right)} \right){{\bar \lambda }^3} .
\end{aligned}
\end{equation}
\end{lemma}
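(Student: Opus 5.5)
The plan is to establish the identity first and then derive the inequality from it using Lemma~\ref{lemma2.5}.

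\textbf{The identity.} Start from the Weitzenb\"ock/Lichnerowicz-type expression for $\langle \Delta R,R\rangle$ on an Einstein manifold. By the second Bianchi identity and $\nabla Ric=0$, one has $\Delta R = \mathrm{Ric}\circ R + Q(R)$, where $Q(R)$ is quadratic in $R$. Its pairing with $R$ is a combination of the two cubic invariants $\sum R_{ikjl}R_{kslt}R_{isjt}$ and $\sum R_{ijkl}R_{klst}R_{ijst}$, plus $\sum R_{ts}R_{tijk}R_{sijk}=\frac{s}{2n}|R|^2$. Proposition~\ref{pro4.1} with $T=R$ expresses $\sum_\alpha\lambda_\alpha|S^\alpha R|^2$ as $-\sum R_{sptq}R_{spij}R_{tqij}+\frac{s}{n}|R|^2$. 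Proposition~\ref{pro4.2} expresses $tr\,\mathcal C^3$ as another combination of the two cubic invariants. Solving this $2\times2$ linear system writes $\langle\Delta R,R\rangle$ as $a\sum_\alpha\lambda_\alpha|S^\alpha R|^2+b\,tr\,\mathcal C^3+c\,s|R|^2$. We expect $a=2$ and $b=8$.

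Next apply Lemma~\ref{lemma4.5} with $m=1$ to replace $R$ by $R^1$. On an Einstein manifold $|Ric|^2=s^2/(2n)$, and $|R|^2=4\sum\lambda_\alpha^2$ by Proposition~\ref{pro4.2}. Substituting $s=2N\bar\lambda=n(n+1)\bar\lambda$ (since $tr\,\mathcal C=s/2$) collects everything into powers of $\bar\lambda$. This should reproduce the coefficients $-4(n+5)$ and $2n(n+1)(n+3)$. This bookkeeping, in particular getting the exact coefficients of the two cubic invariants in $\langle\Delta R,R\rangle$ under the paper's conventions, is the main obstacle. I would check the constants against $\mathbb{CP}^n$, where $R^1=0$, $\Delta R=0$, and all $\lambda_\alpha$ are explicit, and against a product example.

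\textbf{The inequality.} Apply Lemma~\ref{lemma2.5} with $M_\alpha=|S^\alpha R^1|^2$. By Lemma~\ref{lemma4.3} applied to $T=R^1$, whose Ricci tensor vanishes in the Einstein case, we get $S=\sum M_\alpha=n|R^1|^2$ and $M\le 2|R^1|^2$. Hence $S/M\ge n/2\ge\beta$, so the lemma gives
\[
\sum_\alpha\lambda_\alpha|S^\alpha R^1|^2\ge -n\theta\bar\lambda|R^1|^2 .
\]
Proposition~\ref{proposition4.4} with $m=1$ gives $|R^1|^2=|R|^2-\frac{2s^2}{n(n+1)}=4\sum\lambda_\alpha^2-2n(n+1)\bar\lambda^2$. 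Therefore
\[
2\sum_\alpha\lambda_\alpha|S^\alpha R^1|^2\ge-8n\theta\bar\lambda\sum\lambda_\alpha^2+4n^2(n+1)\theta\bar\lambda^3 ,
\]
and adding this to the remaining terms of the identity yields exactly the second line of \eqref{equation4.1}. One caveat: if $\bar\lambda<0$, the sign of $\theta\bar\lambda$ enters, but Lemma~\ref{lemma2.5} is stated without a sign restriction on $\bar\lambda$, so the step goes through as written.
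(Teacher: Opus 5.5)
Your proposal is correct and follows the same route as the paper. It rewrites an Einstein Weitzenb\"ock identity via Propositions~\ref{pro4.1} and~\ref{pro4.2}, then applies Lemma~\ref{lemma4.5} with $m=1$, then Lemma~\ref{lemma2.5} with $M_\alpha=|S^\alpha R^1|^2$, justified by Lemma~\ref{lemma4.3}; your observation that $Ric^{R^1}=0$ makes the bound $S/M\ge n/2$ explicit. The one input you left open is exactly what the paper cites from \cite{dai2024einstein}, namely $\langle\Delta R,R\rangle=\frac{s}{n}|R|^2-\sum R_{ijkl}R_{ijst}R_{klst}-4\sum R_{ijkl}R_{iskt}R_{jslt}$, and with it your $2\times 2$ solve does give $a=2$, $b=8$, and $c\,s|R|^2=-\frac{s}{n}|R|^2$.
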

\begin{proof}
For an Einstein metric, it follows from Proposition 3.1 in \cite{dai2024einstein} that
$$\left\langle {\Delta R,R} \right\rangle  = \frac{s}{n}{\left| R \right|^2}  - \sum\limits_{i,j,k,l,s,t = 1}^{2n} {{R_{ijkl}}{R_{ijst}}{R_{klst}}}  - 4\sum\limits_{i,j,k,l,s,t = 1}^{2n} {{R_{ijkl}}{R_{iskt}}{R_{jslt}}}.$$
Using Proposition \ref{pro4.1} and Proposition \ref{pro4.2}, this becomes
\begin{align*}
 \left\langle {\Delta R,R} \right\rangle  =& 2g\left( {\mathcal{C}\left( {{R^{{ \odot ^2}{V^{1,0}}}}} \right),\overline {{R^{{ \odot ^2}{V^{1,0}}}}} } \right) + 8tr\left( {{\mathcal{C}^3}} \right) - \frac{4s}{n}tr\left( {{\mathcal{C}^2}} \right)\\
 =& 2\sum\limits_{\alpha  = 1}^N {{\lambda _\alpha }{{\left| {{S^\alpha }R} \right|}^2}}  + 8\sum\limits_{\alpha  = 1}^N {{\lambda _\alpha }^3}  - \frac{4}{n}s\sum\limits_{\alpha  = 1}^N {{\lambda _\alpha }^2} .
\end{align*}
Now apply  Lemma ~\ref{lemma4.5} with $m=1$. In the Einstein case  $$s = n\left( {n + 1} \right)\bar \lambda ,{\left| {Ric} \right|^2} = \frac{{{s^2}}}{{2n}} = \frac{{n{{\left( {n + 1} \right)}^2}}}{2}{{\bar \lambda }^2}.$$ Thus
\begin{align*}
  \left\langle {\Delta R,R} \right\rangle
   =& 2\left( {\sum\limits_{\alpha  = 1}^N {{\lambda _\alpha }{{\left| {{S^\alpha }{R^1}} \right|}^2}}  + \frac{{4s}}{{n\left( {n + 1} \right)}}\left( {{{\left| {Ric} \right|}^2} - 2\sum\limits_{\alpha  = 1}^N {{\lambda _\alpha }^2} } \right) - \frac{{n - 1}}{{{n^2}{{\left( {n + 1} \right)}^2}}}{s^3}} \right) \hfill \\
   &+ 8\sum\limits_{\alpha  = 1}^N {{\lambda _\alpha }^3}  - \frac{4}{n}s\sum\limits_{\alpha  = 1}^N {{\lambda _\alpha }^2}  \hfill \\
   =& 2\sum\limits_{\alpha  = 1}^N {{\lambda _\alpha }{{\left| {{S^\alpha }{R^1}} \right|}^2}}  + 8\sum\limits_{\alpha  = 1}^N {{\lambda _\alpha }^3}  - 4\left( {n + 5} \right)\bar \lambda \sum\limits_{\alpha  = 1}^N {{\lambda _\alpha }^2}  + 2n\left( {n + 1} \right)\left( {n + 3} \right){{\bar \lambda }^3}.
\end{align*}
Because  ${\lambda _1} +  \cdots  + {\lambda _\beta} \ge  - \beta\theta \bar \lambda $ and by Lemma ~\ref{lemma4.3},  $$\beta \le \frac{n}{2} \le \frac{{\sum\limits_{\alpha  = 1}^N {{{\left| {{S^\alpha }{R^1}} \right|}^2}} }}{{\mathop {\max }\limits_{1 \le \beta  \le N}  {{{\left| {{S^\beta }{R^1}} \right|}^2}} }},$$  Lemma \ref{lemma2.5} yields
\begin{align*}
  \sum\limits_{\alpha  = 1}^N {{\lambda _\alpha }{{\left| {{S^\alpha }{R^1}} \right|}^2}}  \ge &  - \theta \bar \lambda \sum\limits_{\alpha  = 1}^N {{{\left| {{S^\alpha }{R^1}} \right|}^2}}  \hfill \\
   =&  - \theta \bar \lambda \left( {\sum\limits_{\alpha  = 1}^N {{{\left| {{S^\alpha }R} \right|}^2}}  - \left( {n - 1} \right)n\left( {n + 1} \right){{\bar \lambda }^2}} \right) \hfill \\
   =&  - \theta \bar \lambda \left( {\left( {n{{\left| R \right|}^2} - 2{{\left| {Ric} \right|}^2}} \right) - \left( {n - 1} \right)n\left( {n + 1} \right){{\bar \lambda }^2}} \right) \hfill \\
   =&  - 4n\theta \bar \lambda \sum\limits_{\alpha  = 1}^N {{\lambda _\alpha }^2}  + 2{n^2}\left( {n + 1} \right)\theta {{\bar \lambda }^3}.
\end{align*}
Substituting this estimate we obtain
$$\left\langle {\Delta R,R} \right\rangle
\ge 8\sum\limits_{\alpha  = 1}^N {{\lambda _\alpha }^3}  - \left( {8n\theta  + 4\left( {n + 5} \right)} \right)\bar \lambda \sum\limits_{\alpha  = 1}^N {{\lambda _\alpha }^2}  + \left( {4{n^2}\left( {n + 1} \right)\theta  + 2n\left( {n + 1} \right)\left( {n + 3} \right)} \right){{\bar \lambda }^3} .$$
\end{proof}

\begin{remark}
\label{remark4.7}
In fact, the K{\"a}hler algebraic curvature tensor $R^1$ is precisely the Bochner tensor of the K{\"a}hler Einstein manifold $\left( M,g,J \right)$. In formula ~\eqref{equation4.1}, we have
\begin{align*}
&8\sum\limits_{\alpha  = 1}^N {{\lambda _\alpha }^3}  - 4\left( {n + 5} \right)\bar \lambda \sum\limits_{\alpha  = 1}^N {{\lambda _\alpha }^2}  + 2n\left( {n + 1} \right)\left( {n + 3} \right){{\bar \lambda }^3}\\
 =& 8\left( {\sum\limits_{\alpha  = 1}^N {{\lambda _\alpha }^3}  - N{{\bar \lambda }^3}} \right) - 4\left( {n + 5} \right)\bar \lambda \left( {\sum\limits_{\alpha  = 1}^N {{\lambda _\alpha }^2}  - N{{\bar \lambda }^2}} \right)
\end{align*}
and
\begin{align*}
&8\sum\limits_{\alpha  = 1}^N {{\lambda _\alpha }^3}  - \left( {8n\theta  + 4\left( {n + 5} \right)} \right)\bar \lambda \sum\limits_{\alpha  = 1}^N {{\lambda _\alpha }^2}  + \left( {4{n^2}\left( {n + 1} \right)\theta  + 2n\left( {n + 1} \right)\left( {n + 3} \right)} \right){{\bar \lambda }^3}\\
 =& 8\left( {\sum\limits_{\alpha  = 1}^N {{\lambda _\alpha }^3}  - N{{\bar \lambda }^3}} \right) - \left( {8n\theta  + 4\left( {n + 5} \right)} \right)\bar \lambda \left( {\sum\limits_{\alpha  = 1}^N {{\lambda _\alpha }^2}  - N{{\bar \lambda }^2}} \right).
\end{align*}
Thus, when all eigenvalues are equal, i.e., ${\lambda _1} =  \cdots  = {\lambda _N} = \bar \lambda $, both expressions vanish.
\end{remark}

\section{The proof of Theorem~\ref{theorem1.4}}
We shall use Lemma 3.3 and Lemma 3.4 of \cite{fu2025new} to estimate  formula~\eqref{equation4.1}.
\begin{lemma}$(${\cite{fu2025new}}$)$
\label{lemma5.1}
Let $0 \le {\lambda _1} \le {\lambda _2} \le  \cdots  \le {\lambda _N}$ and $\sum\limits_{\alpha  = 1}^N {{\lambda _\alpha }}  = C = \frac{{N\left( {N - 1} \right)}}{{2N - 1}}$, then the function
$$F\left( {{\lambda _1},{\lambda _2}, \cdots ,{\lambda _N}} \right) = \sum\limits_{\alpha  = 1}^N {\lambda _\alpha ^3}  - \sum\limits_{\alpha  = 1}^N {\lambda _\alpha ^2} $$
attains its minimal at  the points $\left( {\frac{C}{N},\frac{C}{N}, \cdots ,\frac{C}{N}} \right)$ and $\left( {0,\frac{C}{{N - 1}}, \cdots ,\frac{C}{{N - 1}}} \right)$.
\end{lemma}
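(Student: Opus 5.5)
The plan is to treat the statement as a finite-dimensional constrained minimization. I would apply the Karush–Kuhn–Tucker conditions to $F$ on the compact simplex $\Delta=\{\lambda_\alpha\ge 0,\ \sum_\alpha\lambda_\alpha=C\}$. This cuts the possible minimizers down to a few explicit families, and the remaining comparison is elementary. Since the ordering $\lambda_1\le\cdots\le\lambda_N$ is just a symmetry reduction, I would work with unordered tuples and sort at the end.

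\emph{Step 1 (first-order conditions).} Put $f(t)=t^3-t^2$, so $F=\sum_\alpha f(\lambda_\alpha)$. A minimizer exists by compactness. At a minimizer there is a multiplier $\mu$ with $f'(\lambda_\alpha)=3\lambda_\alpha^2-2\lambda_\alpha=\mu$ whenever $\lambda_\alpha>0$, and $f'(0)=0\ge\mu$ whenever some $\lambda_\alpha=0$. The equation $3t^2-2t=\mu$ has at most two positive roots $t_1\le t_2$, and these satisfy $t_1+t_2=\tfrac23$. Hence the positive entries take at most two values, one of them $<\tfrac13$ and the other $>\tfrac13$.

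\emph{Step 2 (second-order conditions).} Restricted to the face where the positive coordinates vary and their sum is fixed, the Hessian is $\mathrm{diag}(6\lambda_\alpha-2)$. It must be positive semidefinite on the tangent space $\{\sum v_\alpha=0\}$. If two positive entries were $<\tfrac13$, then $v=e_\alpha-e_\beta$ would give a negative direction. So at most one positive entry equals the small root $b=t_1<\tfrac13$, and the remaining positive entries all equal $a=t_2$. The candidates are therefore: $m$ zeros and $N-m$ equal entries $s=C/(N-m)$; or $m$ zeros, one entry $b$, and $N-m-1$ entries $a=\tfrac23-b$.

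\emph{Step 3 (the equal-entries family).} For $j$ equal positive entries the value is $F=j f(C/j)=C\,s(s-1)$ with $s=C/j$. The function $s(s-1)$ is symmetric about $s=\tfrac12$ and increases in $|s-\tfrac12|$. With $C=\frac{N(N-1)}{2N-1}$ we get $C/N=\frac{N-1}{2N-1}$ and $C/(N-1)=\frac{N}{2N-1}$, which sum to $1$ and straddle $\tfrac12$. So $j=N$ and $j=N-1$ give equal values, and every other $j$ puts $s$ further from $\tfrac12$. Those are exactly the two points in the statement.

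\emph{Step 4 (excluding the mixed family).} I expect this to be the main obstacle. I would first use the constraint $b+(N-m-1)a=C$ together with $a+b=\tfrac23$ to show that such configurations exist only for a narrow range of $m$, since $a>\tfrac13$ forces $(N-m-1)a\le C<N/2$. Then I would compute $F$ explicitly as a one-variable function of $b$ and compare it with the common value $-C\frac{N(N-1)}{(2N-1)^2}$.

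If that computation is messy, the fallback is a direct second-order perturbation at the mixed point. Move mass from $b$ into a zero coordinate, if one exists; this uses the concavity of $f$ on $[0,\tfrac13]$. Or move mass between $b$ and a group of $a$'s, and show that some admissible direction strictly lowers $F$ because $f''(b)<0$ dominates. Either way the mixed configurations are not strict minimizers below the equal-entries value, which completes the identification of the minimizers.
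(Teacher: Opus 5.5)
The paper does not prove this lemma; it imports it from \cite{fu2025new}. I am therefore judging your argument on its own merits. Steps 1--3 are correct. In particular, the identity $F=C\,s(s-1)$ together with $C/N+C/(N-1)=1$ is a clean way to single out the two stated points.

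The gap is Step 4. You leave it open: the explicit computation in $b$ is never carried out. Moreover, the first of your two fallback moves does not work, because moving mass from $b$ into a zero coordinate can never lower $F$. Since $f''(t)=6t-2<0$ on $[0,\tfrac{1}{3})$, the map $x\mapsto f(x)+f(b-x)$ is strictly concave on $[0,b]$ and takes equal values at both ends. So the current configuration is already the minimum along that segment. Equivalently, the one-sided derivative is $f'(0)-f'(b)=-\mu>0$. The counting bound $(N-m-1)a\le C<N/2$ does not help either, since $a>\tfrac{1}{3}$ only yields $N-m-1<\tfrac{3N}{2}$.

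What closes the gap is your second fallback, and it really belongs in Step 2. There you tested the Hessian $\mathrm{diag}(6\lambda_\alpha-2)$ only on $e_\alpha-e_\beta$ for two small entries. Instead, test it on the direction that moves mass from $b$ uniformly onto the $k=N-m-1$ entries equal to $a$; this eliminates the mixed family directly.
\begin{itemize}
\item The curve $\phi(t)=f(b-t)+k\,f(a+t/k)$ is feasible for $t$ of either sign, because it does not touch the zero coordinates.
\item Its first derivative vanishes: $\phi'(0)=f'(a)-f'(b)=0$.
\item Since $6a-2=-(6b-2)$, one gets $\phi''(0)=(6b-2)(1-\tfrac{1}{k})<0$ whenever $k\ge 2$. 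So these points are not even local minima.
\end{itemize}

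You must then dispose of $k\le 1$ explicitly. The case $k=0$ is just $j=1$ in the equal family. The case $k=1$ forces $C=a+b=\tfrac{2}{3}$, which happens only for $N=2$. There $F\equiv-\tfrac{4}{27}$ on the whole simplex (put $\lambda_{1,2}=\tfrac{1}{3}\mp u$).

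With these additions the proof is complete. A minimizer exists by compactness and satisfies KKT because the constraints are affine, so it lies in the equal family. Your Step 3 then finishes the argument, and for $N\ge 3$ it even shows that the two stated points are the only minimizers.
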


\begin{lemma}$(${\cite{fu2025new}}$)$
\label{lemma5.2}
Let ${\lambda _1} \le {\lambda _2} \le  \cdots  \le {\lambda _N}$ satisfy ${\lambda _1} + {\lambda _2} +  \cdots  + {\lambda _\beta} \ge  - \beta\theta \bar \lambda $, where ${\bar \lambda }$ is the average of $\{\lambda _k\}_{k=1}^{N}$. Then
$${\lambda _1} \ge -\frac{{\left( {N - 1} \right)\beta\theta  + N\left( {\beta - 1} \right)}}{{N - \beta}}\bar \lambda. $$
\end{lemma}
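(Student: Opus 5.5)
The plan is a direct averaging argument that uses only the ordering $\lambda_1\le\cdots\le\lambda_N$ and the definition of $\bar\lambda$, with no sign assumption on $\bar\lambda$ or $\theta$. The idea is to bound the contribution of $\lambda_2,\dots,\lambda_\beta$ to the hypothesis ${\lambda _1}+\cdots+{\lambda _\beta}\ge-\beta\theta\bar\lambda$ from above by something involving only $\lambda_1$ and $\bar\lambda$. The hypothesis then becomes a linear inequality in $\lambda_1$ alone, which I solve for $\lambda_1$. I will assume $\beta<N$, which is implicit since the right-hand side has $N-\beta$ in the denominator.

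\emph{Key step.} Since the $\lambda_k$ are increasing, the average of the smallest $\beta-1$ numbers among $\lambda_2,\dots,\lambda_N$ is at most the average of all of them:
$$\frac{1}{\beta-1}\sum_{k=2}^{\beta}\lambda_k\le\frac{1}{N-1}\sum_{k=2}^{N}\lambda_k=\frac{N\bar\lambda-\lambda_1}{N-1}.$$
For non-integer $\beta$ I use the paper's convention: $\sum_{k=1}^{\beta}\lambda_k$ means $\lambda_1$ plus the weighted sum over $k=2,\dots,[\beta]+1$ with weights $1,\dots,1,\beta-[\beta]$. These weights lie in $[0,1]$, total $\beta-1$, and are nonincreasing in $k$. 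A weighted average with such weights over an increasing sequence is still at most the plain average of $\lambda_2,\dots,\lambda_N$. This can be seen by rearrangement, or by writing the weights as a convex combination of indicator vectors of initial segments $\{2,\dots,j\}$. So in all cases
$$\sum_{k=2}^{\beta}\lambda_k\le(\beta-1)\frac{N\bar\lambda-\lambda_1}{N-1}.$$

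\emph{Solving for $\lambda_1$.} Inserting this into the hypothesis gives
$$-\beta\theta\bar\lambda\le\lambda_1+(\beta-1)\frac{N\bar\lambda-\lambda_1}{N-1}.$$
Multiplying by $N-1>0$ and collecting terms yields $(N-\beta)\lambda_1\ge-(N-1)\beta\theta\bar\lambda-N(\beta-1)\bar\lambda$. Dividing by $N-\beta>0$ gives exactly the claimed bound.

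The only step needing care is the weighted-average inequality for fractional $\beta$. For integer $\beta$ it is immediate from the ordering. I do not expect a genuine obstacle; the rest is linear algebra on one inequality.
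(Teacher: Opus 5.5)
Your proof is correct. The paper gives no argument for this lemma; it is quoted from \cite{fu2025new}, so there is no in-paper proof to compare against.

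Your route is the natural self-contained proof. You bound $\lambda_2+\cdots+\lambda_\beta\le\frac{\beta-1}{N-1}(N\bar\lambda-\lambda_1)$ from the ordering alone, then solve one linear inequality using only $N-1>0$ and $N-\beta>0$. It needs no sign information on $\theta$ or $\bar\lambda$. Your handling of fractional $\beta$ is also sound: writing the weights as $(1-\beta_0)\mathbf{1}_{\{2,\dots,[\beta]\}}+\beta_0\mathbf{1}_{\{2,\dots,[\beta]+1\}}$ with $\beta_0=\beta-[\beta]$ does what you need.

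\textbf{State $\beta\ge1$ explicitly.} Your split into ``$\lambda_1$ with weight $1$ plus weights on $k\ge2$ totalling $\beta-1$'' presupposes it. For $\beta<1$ the statement is actually false. Take $N=3$, $\beta=\tfrac12$, $\theta=0$ and $\lambda=(0,\tfrac32,\tfrac32)$. The hypothesis holds, but $\lambda_1=0<\tfrac35=\tfrac35\bar\lambda$. This is harmless in context: the cone $C(\alpha,\theta)$ is defined only for $1\le\alpha\le N$, and the proof of Theorem~\ref{theorem1.4} only treats $1\le\beta\le\frac n2<N$.

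\textbf{Side benefit: the equality case.} Write $D=\frac{(N-1)\beta\theta+N(\beta-1)}{N-\beta}$ as in Section 5. For $1<\beta<N$, $\lambda_1=-D\bar\lambda$ forces equality in your averaging step. Hence $\lambda_2=\cdots=\lambda_N=\frac{N+D}{N-1}\bar\lambda$, which is exactly the second extremal configuration used in Section 5.
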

Set ${\mu _\alpha } = {\lambda _\alpha } + D\bar \lambda$, $D = \frac{{\left( {N - 1} \right)\beta\theta  + N\left( {\beta - 1} \right)}}{{N - \beta}}$. Then ${\mu _\alpha } \ge 0$ and $\sum\limits_{\alpha  = 1}^N {{\mu _\alpha }}  = \left( {1 + D} \right)N\bar \lambda $. Rewriting \eqref{equation4.1} we obtain
\begin{align*}
\frac{1}{8}\left\langle {\Delta R,R} \right\rangle  \ge & \sum\limits_{\alpha  = 1}^N {{\lambda _\alpha }^3}  - \left( {n\theta  + \frac{{n + 5}}{2}} \right)\bar \lambda \sum\limits_{\alpha  = 1}^N {{\lambda _\alpha }^2}  + \left( {\frac{{{n^2}\left( {n + 1} \right)}}{2}\theta  + \frac{{n\left( {n + 1} \right)\left( {n + 3} \right)}}{4}} \right){{\bar \lambda }^3}\\
 =& \sum\limits_{\alpha  = 1}^N {{{\left( {{\mu _\alpha } - D\bar \lambda } \right)}^3}}  - \left( {n\theta  + \frac{{n + 5}}{2}} \right)\bar \lambda \sum\limits_{\alpha  = 1}^N {{{\left( {{\mu _\alpha } - D\bar \lambda } \right)}^2}} \\
 &+ \left( {\frac{{{n^2}\left( {n + 1} \right)}}{2}\theta  + \frac{{n\left( {n + 1} \right)\left( {n + 3} \right)}}{4}} \right){{\bar \lambda }^3}\\
 =& \sum\limits_{\alpha  = 1}^N {{\mu _\alpha }^3}  - \left( {3D + n\theta  + \frac{{n + 5}}{2}} \right)\bar \lambda \sum\limits_{\alpha  = 1}^N {{\mu _\alpha }^2}  + G\left( {\theta ,n} \right){{\bar \lambda }^3}\\
 =& F\left( {{\mu _1},{\mu _2}, \cdots ,{\mu _N}} \right) + G\left( {\theta ,n} \right){{\bar \lambda }^3},
\end{align*}
where $F$ is  defined on the ordered sequence $0 \le {\mu _1} \le {\mu _2} \le  \cdots  \le {\mu _N}$ and $G\left( {\theta ,n} \right)$ is a constant independent of $\{\mu _\alpha \}$. Choose $\theta$ such that
$$3D + n\theta  + \frac{{n + 5}}{2} = \frac{{2N - 1}}{{N - 1}}\left( {1 + D} \right),$$
that is
$$\theta \left( {n,\beta} \right) = \frac{{ - 2N\beta - \left( {n - 1} \right)N + \left( {n + 3} \right)\beta}}{{2N\beta + 2nN - 2\left( {n + 2} \right)\beta}},$$
where we have used the definition of $D$. Continuing to transform $F$,
\begin{align*}
F =& \sum\limits_{\alpha  = 1}^N {{\mu _\alpha }^3}  - \frac{{2N - 1}}{{N - 1}}\left( {1 + D} \right)\bar \lambda \sum\limits_{\alpha  = 1}^N {{\mu _\alpha }^2} \\
 =& {\left( {\frac{{2N - 1}}{{N - 1}}\left( {1 + D} \right)\bar \lambda } \right)^3}\left( {\sum\limits_{\alpha  = 1}^N {{{\left( {\frac{{N - 1}}{{\left( {2N - 1} \right)\left( {1 + D} \right)\bar \lambda }}{\mu _\alpha }} \right)}^3}}  - \sum\limits_{\alpha  = 1}^N {{{\left( {\frac{{N - 1}}{{\left( {2N - 1} \right)\left( {1 + D} \right)\bar \lambda }}{\mu _\alpha }} \right)}^2}} } \right).
\end{align*}
Note that $$\sum\limits_{\alpha  = 1}^N {\frac{{N - 1}}{{\left( {2N - 1} \right)\left( {1 + D} \right)\bar \lambda }}{\mu _\alpha }}  = \frac{{N - 1}}{{\left( {2N - 1} \right)\left( {1 + D} \right)\bar \lambda }}\sum\limits_{\alpha  = 1}^N {{\mu _\alpha }}  = \frac{{N\left( {N - 1} \right)}}{{2N - 1}},$$
hence the rescaled variables satisfy the hypotheses of Lemma~\ref{lemma5.1}. Consequently the minimum of $F$ is attained at
$$\left( {{\mu _1},{\mu _2}, \cdots ,{\mu _N}} \right) = \left( {\left( {1 + D} \right)\bar \lambda ,\left( {1 + D} \right)\bar \lambda , \cdots ,\left( {1 + D} \right)\bar \lambda } \right)$$
and
$$\left( {{\mu _1},{\mu _2}, \cdots ,{\mu _N}} \right) = \left( {0,\frac{{N\left( {1 + D} \right)}}{{N - 1}}\bar \lambda , \cdots ,\frac{{N\left( {1 + D} \right)}}{{N - 1}}\bar \lambda } \right).$$Returning to the original variables, these two configurations correspond respectively to
$$\left( {{\lambda _1},{\lambda _2}, \cdots ,{\lambda _N}} \right) = \left( {\bar \lambda ,\bar \lambda , \cdots ,\bar \lambda } \right)$$
and
$$\left( {{\lambda _1},{\lambda _2}, \cdots ,{\lambda _N}} \right) = \left( { - D\bar \lambda ,\frac{{N + D}}{{N - 1}}\bar \lambda , \cdots ,\frac{{N + D}}{{N - 1}}\bar \lambda } \right).$$
For both configurations the right-hand side of \eqref{equation4.1} vanishes.

\begin{proof}[{\bf Proof of Theorem~\ref{theorem1.4}}]
From the discussion above we see:  for a K$\ddot a$hler Einstein manifold $\left( M^n,g,J \right)$ whose Calabi operator $\mathcal{C}$ satisfies
 $${\lambda _1} + {\lambda _2} +  \cdots  + {\lambda _\beta} \ge  - \beta\theta \bar \lambda, \ \ \beta \le \frac{n}{2},$$
 with
$$ \theta \left( {n,\beta} \right) = \frac{{ - 2N\beta - \left( {n - 1} \right)N + \left( {n + 3} \right)\beta}}{{2N\beta + 2nN - 2\left( {n + 2} \right)\beta}},$$
we have
$$\left\langle {\Delta R,R} \right\rangle  \ge 0,$$
and the equality holds precisely when the eigenvalue vector is one of the two extremal configurations described above. Now
$$\frac{1}{2}\Delta {\left| R \right|^2} = {\left| {\nabla R} \right|^2} + \left\langle {\Delta R,R} \right\rangle  \ge 0,$$
which implies
$$\nabla R = 0\ \  \text{and} \ \ \left\langle {\Delta R,R} \right\rangle  = 0.$$
Observe that for $\beta >1$, equality in \eqref{equation2.3} holds  only if
$${\lambda _1} =  \cdots  = {\lambda _{\left[ \beta\right]}} = {\lambda _{\left[ \beta \right] + 1}}.$$
Hence, when the right-hand side of \eqref{equation4.1} is zero,
$$\left( {{\lambda _1},{\lambda _2}, \cdots ,{\lambda _N}} \right) = \left( {\bar \lambda ,\bar \lambda , \cdots ,\bar \lambda } \right),$$
that is to say that $\left( M,g,J \right)$ is nonnegative constant holomorphic sectional curvature.

For the borderline case $\beta=1$, we have
$${\lambda _1} \ge  - \frac{{ - 2N - \left( {n - 1} \right)N + \left( {n + 3} \right)}}{{2N + 2nN - 2\left( {n + 2} \right)}}\bar \lambda  = \frac{1}{2}\left( {1 - \frac{1}{{\left( {n + 1} \right)N - \left( {n + 2} \right)}}} \right)\bar \lambda.$$
Thus either ${\lambda _1}=0$, i.e., $\left( M,g,J \right)$ is  locally flat, or the Calabi operator is positive. In the latter case the universal cover $\widetilde M$ of $M$ carries a positive Calabi operator, which implies that $\widetilde M$ is biholomorphic to ${\mathbb{CP}{^n}}$. Since $g$ is Einstein metric,  $\widetilde M$ is in fact isometric to ${\mathbb{CP}{^n}}$ with the Fubini-Study metric.
\end{proof}


\bibliographystyle{amsalpha}
\bibliography{references}

@article{fu2025new,
  title={New rigidity theorem of {E}instein manifolds and curvature operator of the second kind},
  author={Fu, Haiping and Lu, Yao},
  journal={arXiv preprint arXiv:2512.21496},
  year={2025}
}

@article{tachibana_theorem_1974,
	title = {A theorem on {Riemannian} manifolds of positive curvature operator},
	volume = {50},
	doi = {10.3792/PJA/1195518988},
	number = {4},
	journal = {Proceedings of the Japan Academy},
	author = {Tachibana, Shun-ichi },
	month = jan,
	year = {1974},
	pages = {301--302},
}

@article{li2023kahler,
  title={{K}{\"a}hler manifolds and the curvature operator of the second kind},
  author={Li, Xiaolong},
  journal={Mathematische Zeitschrift},
  volume={303},
  number={4},
  pages={101},
  year={2023},
  publisher={Springer}
}

@article{bochner1946vector,
     author = {Bochner, S.},
     title = {Vector fields and {R}icci curvature},
     journal = {Bull. Amer. Math. Soc.},
     volume = {52},
     number = {12},
     year = {1946},
     pages = { 776-797},
     language = {en},
     url = {http://dml.mathdoc.fr/item/1183509635}
}

@article{calabi1960compact,
  title={On compact, locally symmetric {K}{\"a}hler manifolds},
  author={Calabi, Eugenio and Vesentini, Edoardo},
  journal={Annals of Mathematics},
  volume={71},
  number={3},
  pages={472--507},
  year={1960},
  publisher={JSTOR}
}

@article{ogiue1980kaehler,
  title={{K}aehler manifolds of positive curvature operator},
  author={Ogiue, Koichi and Tachibana, Shun-Ichi},
  journal={Proceedings of the American Mathematical Society},
  pages={548--550},
  year={1980},
  publisher={JSTOR}
}

@article{broder2025vanishing,
  title={Vanishing theorems for {H}odge numbers and the {C}alabi curvature operator},
  author={Broder, Kyle and Nienhaus, Jan and Petersen, Peter and Stanfield, James and Wink, Matthias},
  journal={arXiv preprint arXiv:2503.06870},
  year={2025}
}

@article{borel1960curvature,
  title={On the curvature tensor of the {H}ermitian symmetric manifolds},
  author={Borel, Armand},
  journal={Annals of Mathematics},
  volume={71},
  number={3},
  pages={508--521},
  year={1960},
  publisher={JSTOR}
}

@article{sitaramayya1973curvature,
  title={Curvature tensors in {K}{\"a}hler manifolds},
  author={Sitaramayya, Malladi},
  journal={Transactions of the American Mathematical Society},
  volume={183},
  pages={341--353},
  year={1973}
}

@article{ni2018comparison,
  title={Comparison and vanishing theorems for {K}{\"a}hler manifolds},
  author={Ni, Lei and Zheng, Fangyang},
  journal={Calculus of Variations and Partial Differential Equations},
  volume={57},
  number={6},
  pages={151},
  year={2018},
  publisher={Springer}
}

@article{siu1980compact,
  title={Compact {K}{\"a}hler manifolds of positive bisectional curvature},
  author={Siu, Yum-Tong and Yau, Shing-Tung},
  journal={Inventiones mathematicae},
  volume={59},
  number={2},
  pages={189--204},
  year={1980},
  publisher={Springer-Verlag Berlin/Heidelberg}
}

@article{matsushima1957structure,
  title={Sur la structure du groupe d’hom{\'e}omorphismes analytiques d’une certaine vari{\'e}t{\'e} kaehl{\'e}rinne},
  author={Matsushima, Yoz{\^o}},
  journal={Nagoya Mathematical Journal},
  volume={11},
  pages={145--150},
  year={1957},
  publisher={Cambridge University Press}
}

@article{dai2024einstein,
  title={{E}instein manifolds and curvature operator of the second kind},
  author={Dai, Zhi-Lin and Fu, Hai-Ping},
  journal={Calculus of Variations and Partial Differential Equations},
  volume={63},
  number={2},
  pages={53},
  year={2024},
  publisher={Springer}
}

@article{li2025new,
  title={New sphere theorems under curvature operator of the second kind},
  author={Li, Xiaolong},
  journal={Journal of the London Mathematical Society},
  volume={112},
  number={5},
  pages={e70356},
  year={2025},
  publisher={Wiley Online Library}
}

@article{wang_weitzenb$backslash$_2025,
	title = {{W}eitzenb{\"o}ck-{Bochner}-{Kodaira} formulas with quadratic curvature terms},
	journal = {arXiv preprint arXiv:2509.00468},
	author = {Wang, Mingwei  and Yang, Xiaokui },
	year = {2025},
}

@article{petersen2021vanishing,
  title={Vanishing and estimation results for {H}odge numbers},
  author={Petersen, Peter and Wink, Matthias},
  journal={Journal f{\"u}r die reine und angewandte Mathematik (Crelles Journal)},
  volume={2021},
  number={780},
  pages={197--219},
  year={2021},
  publisher={De Gruyter}
}

@article{2021New,
  title={New curvature conditions for the {B}ochner Technique},
  author={ Petersen, Peter  and  Wink, Matthias },
  journal={Inventiones mathematicae},
  volume={224},
  number={2},
  year={2021},
}
\end{document}